\newcommand{\balpha}{\boldsymbol\alpha}
\newcommand{\bbeta}{\boldsymbol\beta}
\newcommand{\bz}{\boldsymbol z}
\newcommand{\bw}{\boldsymbol w}
\newcommand{\bx}{\boldsymbol{x}}
\newcommand{\by}{\boldsymbol{y}}
\newcommand{\bxs}{\boldsymbol{x_s}}
\newcommand{\bys}{\boldsymbol{y_s}}
\theoremstyle{plain}
\newtheorem{theo}{Theorem}
\newtheorem{coro}[theo]{Corollary}
\newtheorem{lemm}[theo]{Lemma}
\theoremstyle{definition}
\theoremstyle{remark}
\numberwithin{equation}{section}
\numberwithin{theo}{section}
\title[]{Twisting and stabilization in Knot Floer homology}
\date {December, 2023}
\author{Soheil Azarpendar}
\address{Mathematical Institute, University of Oxford, Andrew Wiles Building, Radcliffe
Observatory Quarter, Woodstock Road, Oxford, OX2 6GG, UK}
\email{azarpendar@maths.ox.ac.uk} 
\keywords{}
\begin{document}

\begin{abstract}
Let $K_m$ be the result of applying $m$ full twists to $n$ parallel strands in a knot $K$. We prove that extremal knot Floer homologies of $K_m$ stabilize as $m$ goes to infinity. 
\end{abstract}

\maketitle

\section{Introduction}
Assume you have a diagram $D$ of the knot $K$, and some part of the diagram consists of $n$-parallel strands. More precisely, this means there is a disk $B$ in the plane such that the intersection of $D$ with $B$ looks like the identity $n$-braid (preserving the orientations) i.e. $$B \cap D \simeq I \times \{1,\dots,n\}.$$ 
We can define a family of knots $K_m$ by applying $m$ full twists to these strands. More precisely, a diagram $D_m$ for $K_m$ can be constructed by replacing $B \cap D$ with the diagram of the braid word $$((\sigma_1\sigma_2\dots\sigma_{n-1})^n)^m,$$
which is $m$ full twists on $n$ strands. You can see an example of twisting on two strands in Figure \ref{2strand3twist}. Figure \ref{3strandfulltwist} shows a full twist on 3 strands. Note that applying full twists doesn't change the number of components of the knot and preserves its orientation.\\

\begin{figure}[h]
\centering
\begin{center}
\includegraphics[scale=0.5]{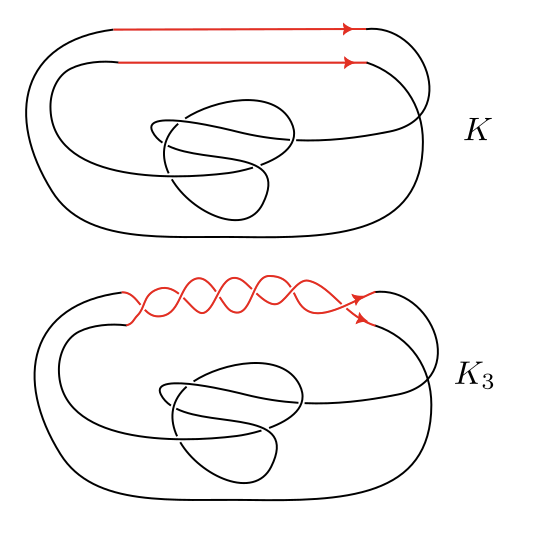}
\end{center}
\caption{Three full twists on 2 strands}\label{2strand3twist}
\end{figure}
\begin{figure}[h]
\centering
\begin{center}
\includegraphics[scale=0.4]{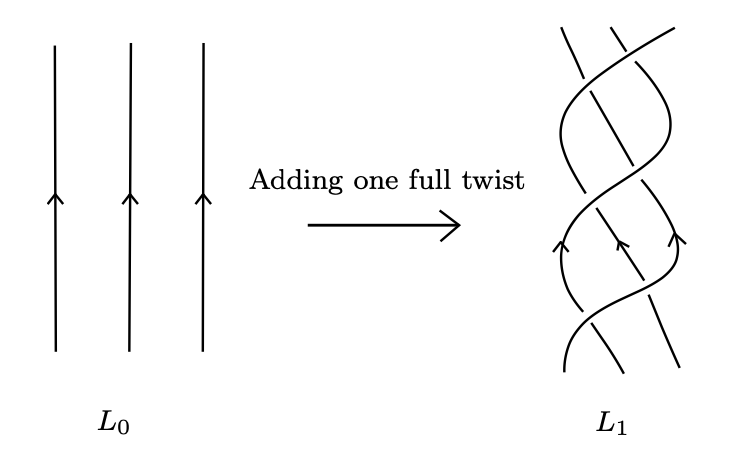}
\end{center}
\caption{A full twist on 3 strands\cite{chen2022twistings}}\label{3strandfulltwist}
\end{figure}
We want to investigate stabilization phenomena in $\widehat{HFK}(K_m)$ as $m \rightarrow \infty$. These stabilizations arise as categorifications of results about the Alexander polynomial. The most well-behaved case is twisting 2 parallel strands in a knot diagram. In this case, the total coefficient sequence of $\Delta(K_m)$ stabilises for $m>>0$. This is formulated in Theorem \ref{Alexpolystablize} taken from a paper by Lambert-Cole\cite{LambertCole2016TwistingMA}. 
\begin{theo}\label{Alexpolystablize}
Let $K_m$ be the result of applying $m$ full twists on two parallel strands in $K$. 
    There exists some $k>0$, some $d \in \mathbb{Z}$, and some polynomial $f(t)\in \mathbb{Z}[t]$ such that for $m$ sufficiently large, the Alexander polynomial of $K_m$ has the form 
    $$\Delta_{K_m}(t)=t^{m-k} f(t) + d \  \Delta_{T(2,m-k)}(t)+t^{-(m-k)} f(t).$$
\end{theo}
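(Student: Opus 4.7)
The natural approach is via Conway's skein relation
\[ \Delta_{L_+}(t) - \Delta_{L_-}(t) = (t^{1/2} - t^{-1/2})\,\Delta_{L_0}(t) \]
applied at a crossing inside the twist box $B$. Let $p_j(t)$ denote the Alexander polynomial of the link obtained from $D$ by replacing the identity $2$-braid in $B$ with $\sigma_1^j$, so that $\Delta_{K_m}(t) = p_{2m}(t)$, with $p_0 = \Delta_K$ and $p_1$ the Alexander polynomial of the $2$-component link produced by inserting a single half-twist. Performing the skein relation at one crossing of $\sigma_1^j$ gives the linear recursion
\[ p_j(t) = p_{j-2}(t) + (t^{1/2} - t^{-1/2})\,p_{j-1}(t), \qquad j \ge 2. \]
Since the torus polynomials $\Delta_{T(2,j)}(t)$ satisfy the same recursion with $\Delta_{T(2,0)} = 0$ and $\Delta_{T(2,1)} = 1$, an immediate induction yields
\[ p_j(t) = p_0(t)\,\Delta_{T(2,j-1)}(t) + p_1(t)\,\Delta_{T(2,j)}(t). \]

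The characteristic roots $t^{1/2}$ and $-t^{-1/2}$ of the recursion produce the exponential $t^{\pm m}$ behaviour featured in the theorem. Using the identity $\Delta_{T(2,2m+1)} + \Delta_{T(2,2m-1)} = t^m + t^{-m}$, which follows by induction from the same recursion, the plan is to rewrite the above at $j = 2m$ in the form
\[ \Delta_{K_m}(t) = \alpha(t)\bigl(t^m + t^{-m}\bigr) + \beta(t)\,\Delta_{T(2,2m-1)}(t), \]
for fixed Laurent polynomials $\alpha, \beta$ depending only on $K$ and the chosen strands. The symmetry $\Delta_{K_m}(t^{-1}) = \Delta_{K_m}(t)$ of the Alexander polynomial then forces $\alpha$ and $\beta$ to be reciprocal.

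For $m \gg 0$ the supports of $\alpha(t)\,t^{\pm m}$ sit at the extremes of the Laurent polynomial, while $\beta(t)\,\Delta_{T(2,2m-1)}(t)$ fills the entire range, and combining them reveals three disjoint coefficient bands. The top and bottom bands are each, up to the shift $t^{\pm(m-k)}$, a polynomial $f(t)$ independent of $m$, and the remaining middle band is a fixed integer multiple $d$ of $\Delta_{T(2,m-k)}(t)$ for a suitable $k > 0$ read off from $\deg\alpha$ and $\deg\beta$. The main obstacle is this last identification: proving that what survives of $\beta(t)\,\Delta_{T(2,2m-1)}(t)$ after absorbing its tails into the extremal $f$-terms is genuinely a scalar multiple of a torus polynomial. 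This step will use the reciprocal structure of $\beta$ together with the telescoping behaviour of $\Delta_{T(2,n)}$ under multiplication by a short reciprocal factor; once it is in place, the independence of $m$ is automatic because $\alpha$ and $\beta$ themselves do not depend on $m$.
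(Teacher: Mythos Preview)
The paper does not prove Theorem~\ref{Alexpolystablize}; it is quoted verbatim from Lambert-Cole's paper \cite{LambertCole2016TwistingMA} as background for the categorified Theorem~\ref{L-Cstablize} and the main result. So there is no ``paper's own proof'' to compare against.

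That said, your skein-relation strategy is a sound and essentially self-contained way to obtain the statement. The recursion $p_j = p_{j-2} + (t^{1/2}-t^{-1/2})p_{j-1}$ and the closed form $p_j = p_0\,\Delta_{T(2,j-1)} + p_1\,\Delta_{T(2,j)}$ are correct, and the identity $\Delta_{T(2,2m+1)}+\Delta_{T(2,2m-1)} = t^m + t^{-m}$ is exactly what is needed to separate the exponential tails. Note that $p_1$ is the Alexander polynomial of a two-component link, hence divisible by $t^{1/2}-t^{-1/2}$; writing $p_1 = (t^{1/2}-t^{-1/2})q_1$ and $\Delta_{T(2,2m)} = (t^{1/2}-t^{-1/2})r_m$ with $r_m = t^{m-1}+t^{m-3}+\dots+t^{-(m-1)}$ keeps everything in integer powers of $t$ and makes the ``$\alpha,\beta$'' rewriting honest.

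The step you flag as the main obstacle is in fact routine once phrased correctly: both $\Delta_{T(2,2m-1)}$ and $r_m$ have constant (alternating or all-one) interior coefficients, so convolving with the \emph{fixed} polynomials $p_0$ and $(t-2+t^{-1})q_1$ produces, away from a boundary layer of width $\max(\deg p_0,\deg q_1+1)$, a coefficient sequence that is eventually periodic of period $2$. The resulting middle coefficients are $\pm d$ with $d = p_0(-1) + (\text{contribution from }q_1)$, and the two boundary layers are genuine polynomials independent of $m$. The ``telescoping'' you allude to is not really needed; the convolution argument already gives the scalar-times-torus-polynomial form directly. One caution: match conventions carefully---your $p_{2m}$ involves $\Delta_{T(2,2m-1)}$, whereas the theorem as stated here writes $\Delta_{T(2,m-k)}$, so be prepared for an index discrepancy coming from how ``$m$ full twists'' is counted in the source.
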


Lambert-Cole proves a categorification of Theorem \ref{Alexpolystablize} in the same paper\cite{LambertCole2016TwistingMA}. This stabilization of knot Floer homology is stated in Theorem \ref{L-Cstablize}. 
\begin{theo}\label{L-Cstablize}
Let $K_m$ be the result of applying $m$ full twists on two parallel strands in $K$. There exist some $k>0$ such that for $|m|$ sufficiently large, the knot Floer homology of $K_m$ satisfies 
\begin{equation*}
 \begin{aligned}
\widehat{HFK}(K_m,j)\cong\widehat{HFK}(K_{m+1},j+1) \ \ \text{for} \ \ j\geq -k, \\ 
\widehat{HFK}(K_m,j)\cong\widehat{HFK}(K_{m+1},j-1)[2] \ \ \text{for} \ \ j\leq k,
\end{aligned}
\end{equation*}
where $[i]$ denotes the shift in Maslov grading. 
\end{theo}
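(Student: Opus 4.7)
The plan is to reinterpret the family $\{K_m\}$ as a family of Dehn surgeries on a single auxiliary curve, and then feed the resulting link Floer data into the Ozsv\'ath--Szab\'o mapping cone formula. Let $U$ be a meridional unknot encircling the two parallel strands inside the twisting region $B$. Applying $m$ full twists to the strands inside $B$ is equivalent to performing $(-1/m)$-surgery on $U$ while leaving the rest of the diagram fixed, so $(S^3, K_m) = \bigl(S^3_{-1/m}(U), K\bigr)$. Consequently, $\widehat{HFK}(K_m)$ can be computed from the bifiltered complex $\mathbf{CFL}^\infty(K \cup U)$ together with its two Alexander filtrations, where one filtration tracks the Alexander grading $j$ on $K$ and the other is an internal parameter $s$ coming from $U$.

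I would next spell out the cone. Writing $A^U_s$ and $B^U_s$ for the standard vertical and horizontal truncations of $\mathbf{CFL}^\infty(K \cup U)$ at level $s$ with respect to $U$, the $(-1/m)$-surgery formula presents $\widehat{HFK}(K_m,j)$ as the homology of
\[
\mathrm{Cone}\Bigl(\bigoplus_{s \in \mathbb{Z}} A^U_s(j) \xrightarrow{\ D_m\ } \bigoplus_{s \in \mathbb{Z}} B^U_s(j)\Bigr),
\]
where the differential $D_m$ combines the vertical and horizontal projection maps, weighted by the slope $-1/m$. The key finiteness input is that $\mathbf{CFL}^\infty(K \cup U)$ is supported in bounded $U$-Alexander grading, so there exists a constant $k$, depending only on the Alexander width of the link complex, beyond which the vertical (respectively horizontal) inclusion is an isomorphism for $s \geq k$ (respectively $s \leq -k$). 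This lets the cone be truncated to a finite subcomplex.

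Using this truncation I would establish the two claimed identifications. For $j \geq -k$ and $|m|$ sufficiently large, the summands with $s$ very negative contribute acyclically, so that the cone collapses onto the remaining summands whose internal structure depends on $m$ only through a uniform reindexing of the $U$-Alexander grading by one unit per increment of $m$. This reindexing directly yields $\widehat{HFK}(K_m, j) \cong \widehat{HFK}(K_{m+1}, j+1)$. The dual range $j \leq k$ is handled by the symmetric analysis at the opposite end of the cone, where the reindexing acts in the opposite direction; the Maslov shift $[2]$ arises from the canonical absolute grading discrepancy between the $A^U_s$ and $B^U_s$ sides of the mapping cone.

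The main obstacle is twofold. First, one must justify the acyclicity of the tail summands uniformly in $m$ once $|s|$ exceeds the Alexander width of the link complex; this is where the hypothesis of exactly two parallel strands enters, since the linking number $\mathrm{lk}(K,U)=\pm 2$ rigidly controls how the $U$- and $K$-Alexander filtrations interact, and the analysis degenerates for wider twist regions (which is presumably why the present paper can only recover stabilization in extremal Alexander gradings for general $n$). Second, pinning down the precise Maslov shift of $[2]$ requires tracking the correction terms in the surgery formula; this should follow from the standard grading conventions but requires a careful bookkeeping. A conjugation symmetry argument for $\mathbf{CFL}^\infty(K \cup U)$ would finally make the two isomorphisms compatible and show that a single constant $k$ governs both stabilization ranges, matching the formulation of Theorem \ref{Alexpolystablize} at the decategorified level.
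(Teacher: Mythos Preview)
The paper does not supply its own proof of this theorem: it is quoted as a result of Lambert-Cole, with the sole added remark that the second isomorphism follows from the first via the conjugation symmetry $\widehat{HFK}_d(K,j)\cong\widehat{HFK}_{d-2j}(K,-j)$. From the paper's description of the methods it is generalising, Lambert-Cole's argument is a direct Heegaard-diagram analysis: one takes a bridge presentation for $K$, realises the full twist as a Dehn twist along a curve $\gamma$ enclosing the two $z$-basepoints, partitions the generators of the multipointed complex according to which of the intersection points near $\gamma$ they use, computes the grading shifts induced by $t_m$ on each class (the $n=2$ analogue of Lemma~\ref{Gradingchange}), and then invokes Rasmussen's localisation principle to show that $t_m$ is a chain map on the relevant Alexander gradings.

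Your surgery/mapping-cone strategy is a genuinely different route, and it is conceptually attractive: it explains the stabilisation as a large-surgery phenomenon and makes the role of the linking number transparent. But as written it has a real gap. The standard Ozsv\'ath--Szab\'o mapping cone computes $\widehat{HF}$ of the surgered manifold, not $\widehat{HFK}$ of a knot sitting inside it; to get $\widehat{HFK}(K_m,j)$ you need a \emph{filtered} mapping cone (a knot-in-the-cone statement, as in Hedden's cabling work or the Manolescu--Ozsv\'ath link surgery formalism), and you do not say which such theorem you are invoking or verify its hypotheses. More seriously, the sentence ``the cone collapses onto the remaining summands whose internal structure depends on $m$ only through a uniform reindexing of the $U$-Alexander grading by one unit per increment of $m$'' is the entire content of the theorem, and you assert it without argument. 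For $(-1/m)$-surgery the truncated cone has roughly $|m|$ copies of the $A$- and $B$-pieces, and one must check that adding one more copy (passing from $m$ to $m+1$) changes the homology only by the claimed grading shift on the range $j\ge -k$; this requires an explicit identification of sub- and quotient complexes and is not automatic from truncation alone. Your ``main obstacle'' paragraph correctly flags both issues but does not resolve them. The symmetry reduction you propose at the end is, by contrast, exactly what the paper records: once the first isomorphism is in hand, the second follows from conjugation symmetry.
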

Note that the second statement follows from combining the first statement with symmetry properties of knot Floer homology.\\

Also note that there are two stabilization effects, one with a positive shift in the Alexander grading and one with a negative, and each works for a specific interval in the range of Alexander gradings. These two intervals overlap in the case of twisting two strands, which in turn means that similar to the Alexander polynomial, total shape of the knot Floer homology stabilizes.\\

When the number of strands is greater than two, we can’t have a result with this strength. However, there will be a stabilization phenomenon in the extremal Alexander gradings. This phenomenon for the Alexander polynomial was formulated by Daren Chen  \cite{chen2022twistings}. 
\begin{theo}\label{Chenstablize}
    Let $L_m$ be the result of applying $m$ full twists on $n$ parallel strands in a link $L$. There exists a Laurent series $h_{L}(t)$ with finitely many terms of negative degree in $t$, and some integer $r \in [\frac{n-1}{2},n-1]$, such that for any $k \in \mathbb{N}$, there exists $N \in \mathbb{N}$ where for any $m \geq N$, the first $k$ terms in the increasing order of degree of $t$ of $\Delta_{L_m}$ agree with the first $k$ term of 
    $$t^{\frac{mn(n-1-2r)}{2}} \ h_{L}(t).$$
\end{theo}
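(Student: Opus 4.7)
The approach is direct computation via the reduced Burau representation and a Schur complement decomposition. First, by Alexander's theorem I present $L$ as a closed braid $L = \hat{\beta}$ with $\beta \in B_N$, arranged (by Markov moves if necessary) so that the $n$ parallel strands being twisted are the first $n$ strands of $\beta$; then $L_m = \widehat{\Delta_n^{2m}\beta}$, where $\Delta_n^2 \in B_n \subset B_N$ denotes the full twist on the first $n$ strands. The symmetrized Alexander polynomial of a closed braid is given, up to an overall sign and a writhe-dependent shift, by the Burau formula
\[
\Delta_{L_m}(t) \;\doteq\; t^{-w_m/2}\, \frac{\det\bigl(I - \overline{\Delta_n^{2m}\beta}(t)\bigr)}{1 + t + \cdots + t^{N-1}},
\]
where $\overline{\gamma}$ denotes the reduced Burau matrix and $w_m = w(\beta) + mn(n-1)$ is the writhe of $\Delta_n^{2m}\beta$.

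The key structural observation is that the reduced Burau of $B_N$, restricted to $B_n \subset B_N$, decomposes as the reduced Burau of $B_n$ (of dimension $n-1$) on one summand and the trivial $B_n$-representation (of dimension $N-n$) on a complementary summand. In this block decomposition, $\overline{\Delta_n^{2m}}$ takes the form $\mathrm{diag}(t^{mn} I_{n-1}, I_{N-n})$. Writing $\overline{\beta}$ in the same blocks as $\begin{pmatrix} A_{11} & A_{12} \\ A_{21} & A_{22} \end{pmatrix}$ and applying the Schur complement determinant formula (assuming $I - A_{22}$ is invertible over $\mathbb{Q}(t)$, which holds generically) yields
\[
\det\bigl(I - \overline{\Delta_n^{2m}\beta}\bigr) \;=\; \det(I - A_{22})\cdot \det\bigl(I - t^{mn} M(t)\bigr),
\]
where $M(t) := A_{11} + A_{12}(I - A_{22})^{-1} A_{21}$ is an $(n-1) \times (n-1)$ matrix of rational functions in $t$ depending on $\beta$ but not on $m$. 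Expanding the right-hand determinant gives $\sum_{k=0}^{n-1} (-1)^k t^{mnk} e_k(M(t))$, where $e_k$ is the $k$-th elementary symmetric function of eigenvalues. Incorporating the symmetric normalization (which contributes a factor $t^{-mn(n-1)/2}$ from the writhe of the added twists) then yields an expansion of the shape
\[
\Delta_{L_m}(t) \;=\; \sum_{r=0}^{n-1} t^{mn(n-1-2r)/2}\, P_r(L,t),
\]
with $P_r(L,t) := (-1)^{n-1-r}\, Q(t)\, e_{n-1-r}(M(t))$ and $Q(t)$ an $m$-independent Laurent polynomial encoding $\det(I - A_{22})$, $1+t+\cdots+t^{N-1}$, and the writhe correction coming from $\beta$ itself.

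For any fixed $K$ and for $m$ sufficiently large, consecutive shifts $mn(n-1-2r)/2$ differ by $mn$, which eventually exceeds $K$ plus the $t$-diameter of every $P_r$. Hence the Laurent bands $t^{mn(n-1-2r)/2}P_r(L,t)$ become non-overlapping in the lowest $K$ degrees, so the first $K$ terms in the increasing order of degree of $\Delta_{L_m}(t)$ are determined entirely by the term with the largest $r$ for which $P_r \not\equiv 0$; call this $r^\ast$ and set $h_L(t) := P_{r^\ast}(L,t)$, which is a Laurent polynomial and in particular a Laurent series with only finitely many negative-degree terms. The lower bound $r^\ast \geq (n-1)/2$ follows from the Alexander symmetry $\Delta_{L_m}(t^{-1}) \doteq \Delta_{L_m}(t)$, which pairs $P_r$ with $P_{n-1-r}$ and forces them to vanish together: if $r^\ast$ were strictly less than $(n-1)/2$, then $n-1-r^\ast > r^\ast$ would also be a nonvanishing index, contradicting maximality. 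The main obstacle is careful book-keeping of the writhe correction, the $1+t+\cdots+t^{N-1}$ denominator, and the sign conventions so that the shift comes out exactly as $mn(n-1-2r)/2$ in the symmetric normalization, together with verifying that $P_r(L,t)$ depends only on $L$ and the chosen $n$ parallel strands (not on the auxiliary braid presentation); the latter is a direct consequence of Markov invariance of the Burau formula. A secondary technical issue is the case in which $I - A_{22}$ fails to be invertible over $\mathbb{Q}(t)$, where the Schur complement must be replaced by a direct cofactor expansion of the full $(N-1)\times(N-1)$ determinant, yielding the same conclusion.
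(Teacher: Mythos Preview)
The paper does not contain a proof of this statement: Theorem~1.3 is quoted from Chen \cite{chen2022twistings} as background and motivation for the paper's own Theorem~1.4, which is a knot Floer categorification (for knots only). There is therefore no proof in this paper to compare yours against.

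That said, your Burau--Schur-complement approach is a natural route to the Alexander-polynomial statement and is essentially sound, with one point that deserves care. Your decomposition of the reduced Burau of $B_N$ restricted to $B_n$ as (reduced Burau of $B_n$) $\oplus$ (trivial) is correct over $\mathbb{Q}(t)$, but the change of basis realising it has rational-function entries; consequently the blocks $A_{ij}$, the matrix $M(t)$, and the individual coefficients $P_r$ are a priori only rational functions, not Laurent polynomials. Your separation argument invokes the ``$t$-diameter of every $P_r$'', which presumes finiteness. The theorem, however, only asks that $h_L$ be a Laurent \emph{series} with finitely many negative terms, and the separation argument survives in this generality: for $r<r^\ast$ the band $t^{mn(n-1-2r)/2}P_r$ still has a well-defined \emph{lowest} degree (bounded below independently of $m$ up to the shift), and for large $m$ that lowest degree exceeds the first $k$ degrees of the $r^\ast$-band. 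So the gap is cosmetic, but you should either rephrase the overlap argument accordingly, or bypass the issue by expanding $\det(I-\overline{\Delta_n^{2m}\beta})$ directly in the standard (integral) basis as a polynomial of degree $\le n-1$ in $t^{mn}$, which makes the coefficients manifestly Laurent polynomials. Your symmetry argument for $r^\ast \ge (n-1)/2$ is correct.
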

Note that Chen proves Theorem 1.3 for links, and in this case, even the growth of the degree of Alexander depends on the structure of the link. In this preprint, we focus on knots.\\

In this preprint, we modify Lambert-Cole's methods to derive a categorification of Chen's result. This result is formulated in the following Theorem. 

\begin{theo}\label{MainTheorem}
Let $K_m$ be the result of applying $m$ full twists on $n$ parallel strands in a link $K$. There exists a series of graded vector spaces $\{A_{-i}\}_{i \in \mathbb{N}}$ such that, for any $k \in \mathbb{N}$, there exists $N \in \mathbb{N}$ where for any $m \geq N$, the first $k$ non-trivial knot Floer homologies $\widehat{HFK}(K_m,j)$ in the decreasing order of Alexander grading agrees with the first $k$ vector spaces in the sequence $\{A_i\}$ up to a shift in the grading (denoted by $f_{m}$) i.e. $$\widehat{HFK}(K_m,g(K_m)-j)\cong A_{-j}\ [f_{m}] \ \ \text{for} \ \ 0\leq j \leq k-1.$$
\end{theo}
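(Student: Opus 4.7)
The plan is to categorify Chen's Theorem~\ref{Chenstablize} by adapting Lambert-Cole's strategy for two-strand twisting to the $n$-strand setting. I would work with a Heegaard diagram of $K_m$ arranged so that the twisting region has a standard combinatorial structure, with each additional full twist inserting a controlled local piece into the diagram. This way, the generators and differentials of $\widehat{CFK}(K_m)$ decompose, up to explicit corrections, according to their behaviour inside the twisting region versus the rest of the knot.

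The first step is to identify the generators of $\widehat{CFK}(K_m)$ in the top Alexander gradings. The Alexander grading of a generator splits as a local contribution from the twisting region plus a global contribution from the remainder of the diagram. I would argue that only a bounded family of ``corner'' configurations in the twisting region can achieve near-maximal Alexander grading; generators whose local component lies outside this family lose at least one unit per extra full twist and hence drop out of the top $k$ gradings once $m$ is large enough. The top non-vanishing Alexander grading $g(K_m)$ itself drifts linearly with $m$, matching Chen's exponent $\tfrac{mn(n-1-2r)}{2}$; this is exactly the linear Alexander shift absorbed by writing the homology at level $g(K_m)-j$.

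The second step is to analyze the differential on these top generators. I would introduce a filtration refining the Alexander grading and show that, on the associated graded, differentials between top-grading generators arise only from holomorphic disks supported in a neighbourhood of the twisting region. These local disks stabilize once $m$ is sufficiently large, so the associated graded of the top slice of $\widehat{CFK}(K_m)$ becomes a fixed local complex independent of $m$. Its homology provides the vector spaces $A_{-j}$, and tracking the Maslov gradings of the generators in the stabilizing corner configurations yields the shift $f_m$. Comparing graded Euler characteristics with $h_L(t)$ from Theorem~\ref{Chenstablize} then gives a consistency check.

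The main technical obstacle is controlling the differential outside this strict local model. For $n=2$, Lambert-Cole can enumerate all relevant disks because the twisting braid has a single generator. For $n\geq 3$, the braid word $(\sigma_1\cdots\sigma_{n-1})^{nm}$ is much richer, and disks that wind between the twisting region and the rest of the diagram could in principle produce $m$-dependent contributions in the top gradings. The hard part will be to show that, once $m$ is sufficiently large, any such disk either lowers the Alexander grading below the range we are tracking or cancels in homology, so that the top of $\widehat{CFK}(K_m)$ is genuinely governed by a local model that does not see the rest of the knot. I expect this step to require either a bordered-Floer factorization through the braid cobordism of $(\sigma_1\cdots\sigma_{n-1})^{nm}$, or a grid-diagram presentation in the spirit of Manolescu--Ozsv\'ath--Sarkar, in which the stabilization becomes a purely combinatorial statement about generators and rectangles crossing the twisting region.
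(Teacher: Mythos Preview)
Your outline captures the correct high-level shape---set up a family of Heegaard diagrams, isolate the generators at the top of the Alexander grading, and argue that the differential on those generators is eventually independent of $m$---but there is a genuine conceptual misstep in the second step and a missing final step.

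\textbf{The locality runs the other way.} You propose that the relevant differentials are ``supported in a neighbourhood of the twisting region'' and that one would stabilize a local complex inside the twist. The paper's mechanism is the opposite. With the bridge/Heegaard model obtained by Dehn-twisting the $\beta$-curves along a curve $\gamma$ encircling $z_0,\dots,z_{n-1}$, the generators are partitioned into families $\mathfrak{G}^i(\mathcal{H}_m)$ according to how many of their $\alpha_1,\dots,\alpha_{n-1}$-coordinates lie at the special points $C^j,D^j$ inside $\gamma$. A grading lemma (the $n$-strand analogue of Lambert-Cole's) shows that under the injection $t_m:\mathfrak{G}(\mathcal{H}_m)\hookrightarrow\mathfrak{G}(\mathcal{H}_{m+1})$, the relative Alexander grading between $\mathfrak{G}^{n-1}$ and any other family grows by at least $n$ per twist, so for $m$ large the top $k$ gradings are generated entirely by $\mathfrak{G}^{n-1}$. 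Now the key observation: any domain with $n_{\bz}(\phi)=0$ that meets $\gamma$ must have a corner at some $C^j$ or $D^j$; but generators in $\mathfrak{G}^{n-1}$ have \emph{no} such corners. Hence the domains counted by $\widehat\partial$ on the top slice are \emph{disjoint} from $\gamma$, and Rasmussen's localization principle immediately gives a bijection of moduli spaces before and after the Dehn twist. No bordered factorization or grid combinatorics is needed; the ``hard part'' you anticipate dissolves once you see that the Dehn twist is supported away from every relevant holomorphic disk.

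\textbf{A missing step: why the stabilized range is nontrivial.} Your proposal stops once the top $k$ gradings of $\widehat{CFK}$ stabilize, but this only says $\widehat{HFK}(K_m,R_m-j)$ stabilizes, where $R_m$ is the top Alexander grading of a \emph{generator}. You still need to show these groups include the actual nontrivial homology at $g(K_m)$. The paper closes this gap by invoking Baker--Motegi: $g(K_m)$ grows linearly in $m$ with slope $\tfrac{n(n-1)}{2}$, while the grading lemma bounds $R_m-L_m$ to grow with slope at most $n(n-1)$. A short comparison then forces $R_m-g(K_m)$ to be bounded, so the stabilized window eventually contains $g(K_m)$. Without this, all your isomorphisms could be between zero groups.
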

Note that the Theorem doesn't talk about the stabilization phenomena on absolute Maslov gradings. This is subject of ongoing research.\\

We decided to change some of the minor details of Theorem \ref{Chenstablize} in the phrasing of Theorem \ref{MainTheorem} so it becomes closer to our method of proof. This difference is superficial.\\

Also, note that another evidence for Theorem \ref{MainTheorem} comes from Hedden's formula for knot Floer homology of cables \cite{Hedden}.\\

This preprint is organized as follows. In Section \ref{Section2} we briefly recall the setting of knot Floer homology. Our setting is identical to Lambert-Cole's\cite{LambertCole2016TwistingMA}. In Section \ref{MainTheorem}, we discuss the proof of Theorem \ref{MainTheorem}.  
\section*{Acknowledgement}
It is a pleasure to thank my advisor, Professor András Juhász, as without his patience and guidance, this project wouldn't be possible. I must deeply thank Daren Chen for proposing the topic of this project, and for our helpful discussions. I must also thank Professor Cladius Zibrowius for his insightful suggestions.  
\section{Brief recollection of knot Floer homology}\label{Section2}
A multi-pointed Heegaard diagram $\mathcal{H}=(\Sigma, \balpha, \bbeta,\bz,\bw)$ for a knot $K \subset S^3$ is a tuple consisting of a genus $g$ Riemann surface $\Sigma$, two multicurves $\balpha=\{\alpha_1,\dots,\alpha_{g+n}\}$ and $\bbeta=\{\beta_1,\dots,\beta_{g+n}\}$, and two collections of basepoints $\bz=\{z_1,...,z_{n-1}\}$ and $\bw=\{w_1,\dots,w_{n-1}\}$ such that: 
\begin{itemize}
  \item $(\Sigma,\balpha,\bbeta)$ is a Heegaard diagram for $S^3$,
  \item each component of $\Sigma \setminus \balpha$ and $\Sigma \setminus \bbeta$ contains exactly one $\bz-$basepoint and one $\bw-$basepoint,
  \item the basepoints $\bz,\bw$ determine the link $L$ as follows: choose collections of embedded arcs $\gamma_1,\dots \gamma_{n-1}$ in $\Sigma \setminus \balpha$ and $\{\delta_1,\dots,\delta_{n-1}\}$ in $\Sigma \setminus \bbeta$ connecting the basepoints. Then after pressing the arcs $\{\gamma_i\}$ into the $\alpha-$handlebody and the arcs $\{\delta_j\}$ into $\beta-$handlebody, their union is $K$ ($K$ is in the bridge position with respect to $\Sigma$).
\end{itemize}
From a multipointed Heegaard diagram $\mathcal{H}$, we obtain a complex $\widehat{CFK}(\mathcal{H})$. In the symmetric product $Sym^{g+n}(\Sigma)$ of the Heegaard surface, the multicurves $\balpha,\bbeta$ determine $(g+n)-$dimensional tori $$\mathbb{T}_{\alpha}\cong\alpha_1\times\dots\times\alpha_{g+n} \ \text{and} \ \mathbb{T}_{\beta}\cong\beta_1\times\dots\times\beta_{g+n}.$$ 
The knot Floer complex $\widehat{CFK}(\mathcal{H})$ is freely generated over $\mathbb{F}[U_{w_2},\dots,U_{w_n}]$ by the intersection points $\mathfrak{G}(\mathcal{H}):=\mathbb{T}_{\alpha} \cap \mathbb{T}_{\beta}$ which are the $g-$tuples ${\boldsymbol{x}=(x_1,\dots,x_g)}$ such that $x_i \in \alpha_i \cap \beta_{\sigma(i)}$ for some permutation $\sigma \in S_g$. The complex possesses two gradings, the Maslov (homological) grading and the Alexander grading. Given any Whitney disk $\phi \in \pi_2(\bx,\by)$ connecting the two generators, the relative gradings of two generators $\bx,\by\in \mathbb{T}_{\alpha} \cap \mathbb{T}_{\beta}$ satisfy the formulas 
\begin{equation}
    M(\bx)-M(\by) = \mu(\phi) - 2n_{\bw}(\phi)
\end{equation}
\begin{equation}
    A(\bx)-A(\by)=n_{\bz}(\phi)-n_{\bw}(\phi).
\end{equation}
Multiplication by any formal variable $U_{w_i}$ drops the Maslov grading by $2$ and the Alexander grading by $1$. The subspace of elements with Maslov grading $i$ and Alexander grading $j$ is denoted by $\widehat{CFK}_i(\mathcal{H},j)$. The Alexander grading is pinned down by the assumption that the Euler characteristic of $\widehat{CFK}_{*}(\mathcal{H},j)$ is equal to the coefficient of $t^j$ in $\Delta_K(t)$, the symmetrized Alexander polynomial of K.\\

The differential is defined as follows: 
$$\widehat{\partial}\bx := \sum_{y \in \mathbb{T}_{\alpha} \cap \mathbb{T}_{\beta}} \sum_{\substack{\phi \in \pi_2(\bx,\by), \\ \mu(\phi)=1, \\ n_{\bz}(\phi)=0,\\ n_{w_1}(\phi)=0}} \# \widehat{\mathcal{M}}_{J_s}(\phi) \ U_{w_2}^{n_{w_2}(\phi)} \dots U_{w_{n-1}}^{n_{w_{n-1}}(\phi)}\ \by$$
where $\widehat{\mathcal{M}}_{J_s}(\phi)$ is the reduced moduli space of holomorphic representatives of $\phi$ w.r.t to a generic path of complex structures $Sym^{g+n}(\Sigma)$ denoted by $J_s$. Note that this differential drops the Maslov grading by one and preserves the Alexander grading.\\

The hat version of the knot Floer homology $\widehat{HFK}_{*}(K)$ is defined as the homology of the complex $(\widehat{CFK}(\mathcal{H}),\widehat{\partial})$.\\

\section{Proof of Theorem \ref{MainTheorem}}\label{ProofSection}
Similar to Lambert-Cole, we start with finding a bridge presentation for the knot $K_m$ from a bridge presentation of $K$ consisting of $T+1$ bridges $a_0 , \dots , a_T$ and $T+1$ overstrands $b_0,\dots,b_T$. Assume that $a_i$ is oriented from $z_i$ to $w_i$, and $b_i \cap a_i=z_i$. We can always take a bridge presentation for $K$ such that the chosen $n$ parallel strands also take the shape of $n$ parallel strands containing $z_0,\dots,z_{n-1}$ respectively. The bridge diagram for 3-strands looks like Figure \ref{3strandbridge} in a local neighborhood.\\

\begin{figure}[h]
\centering
\begin{center}
\includegraphics[scale=0.4]{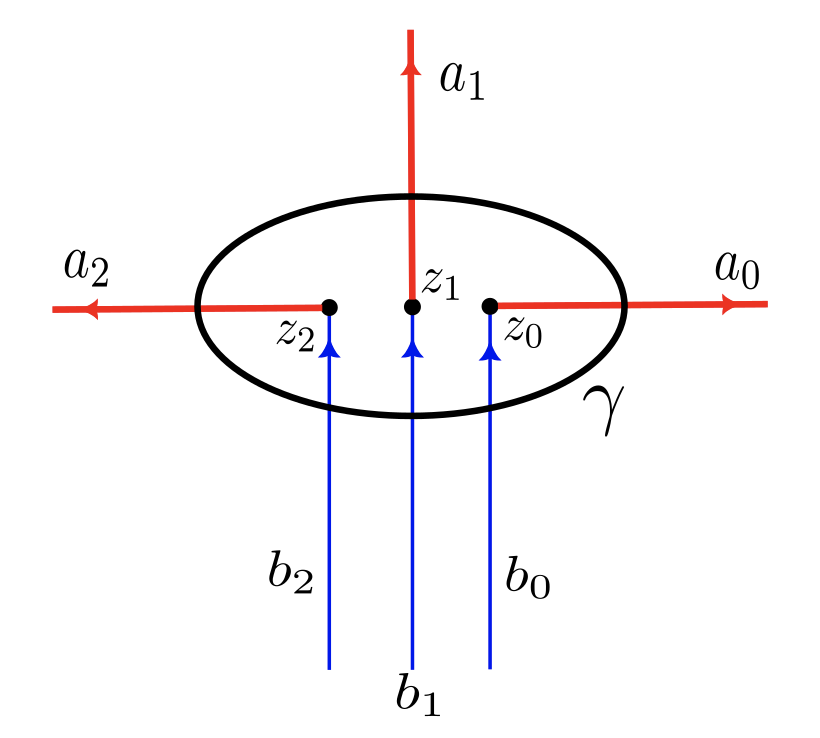}
\end{center}
\caption{3 parallel strands in the bridge position}\label{3strandbridge}
\end{figure}
Similar to Figure \ref{3strandbridge}, assume that $\gamma$ is a curve containing $z_0,\dots,z_{n-1}$. Orient $\gamma$ counter-clockwise as the boundary of the disk containing $z_0,\dots,z_{n-1}$. One can build a bridge diagram for $K_m$ by applying $m$ negative Dehn twists along $\gamma$ to $b_0 , \dots b_{n-1}$. This can be seen in Figure \ref{3strandtwistbridge}. One way to see this is by simplifying the diagram in case of one Dehn twist and using induction. The simplification is done for three strands in Figure \ref{simplifytwisted}, and can be generalized to more strands.\\

A more general argument comes from the surgery description of full twist operation. It is a well-known fact that knot $K_m$ can be obtained from $K$ by $-\frac{1}{m}$ surgery on an unknot that links the $m$ strands positively and geometrically $m$ times (each strand once). An example of such an unknot is the curve $\gamma$. Dehn twist on a curve on the boundary of a Heegaard handlebody corresponds to composition of the gluing map with the Dehn twist. In this case, applying $m$ Dehn twist around $\gamma$ on the boundary of $\beta-$handlebody is equivalent to $-\frac{1}{m}$ surgery on the curve $\gamma$ in $S^3$ (See Figures \ref{surgerytwist} and \ref{Dehntwistsurgery}).\\
\begin{figure}[h]
\centering
\begin{center}
\includegraphics[scale=0.5]{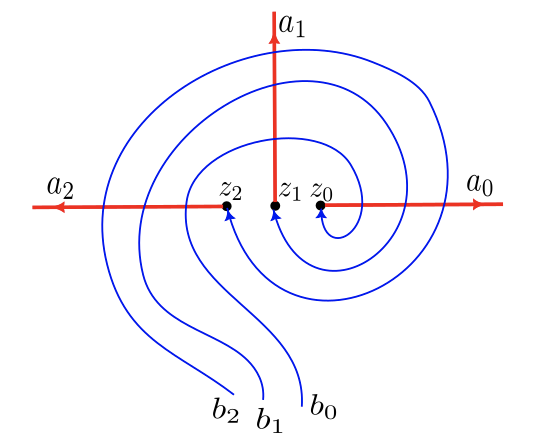}
\end{center}
\caption{One full twist on 3 strands in bridge position}\label{3strandtwistbridge}
\end{figure}
\begin{figure}[h]
\centering
\begin{center}
\includegraphics[scale=0.4]{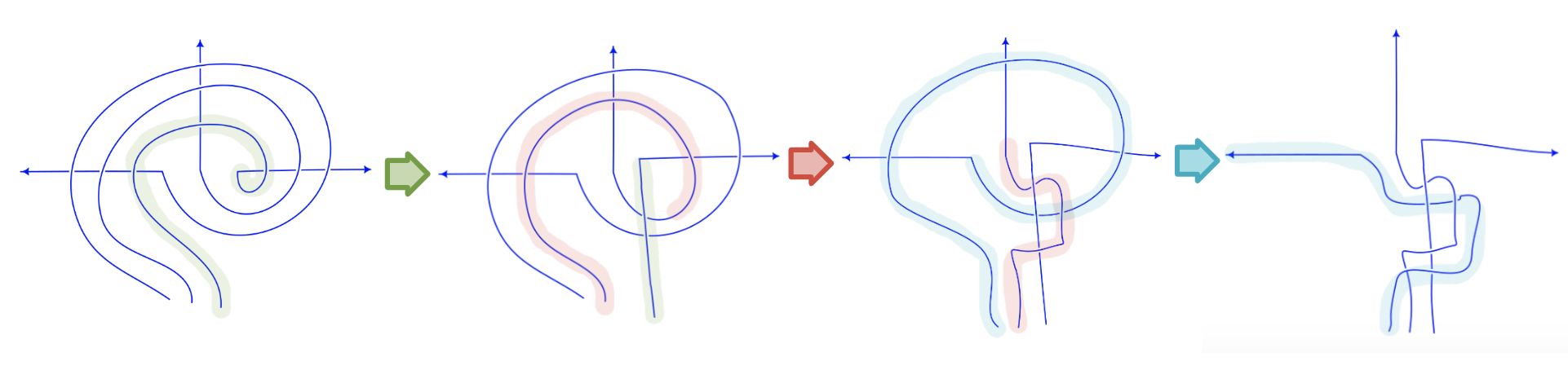}
\end{center}
\caption{From bridge diagram to full twist on three strands}\label{simplifytwisted}
\end{figure}
\begin{figure}[h]
\centering
\begin{center}
\includegraphics[scale=0.2]{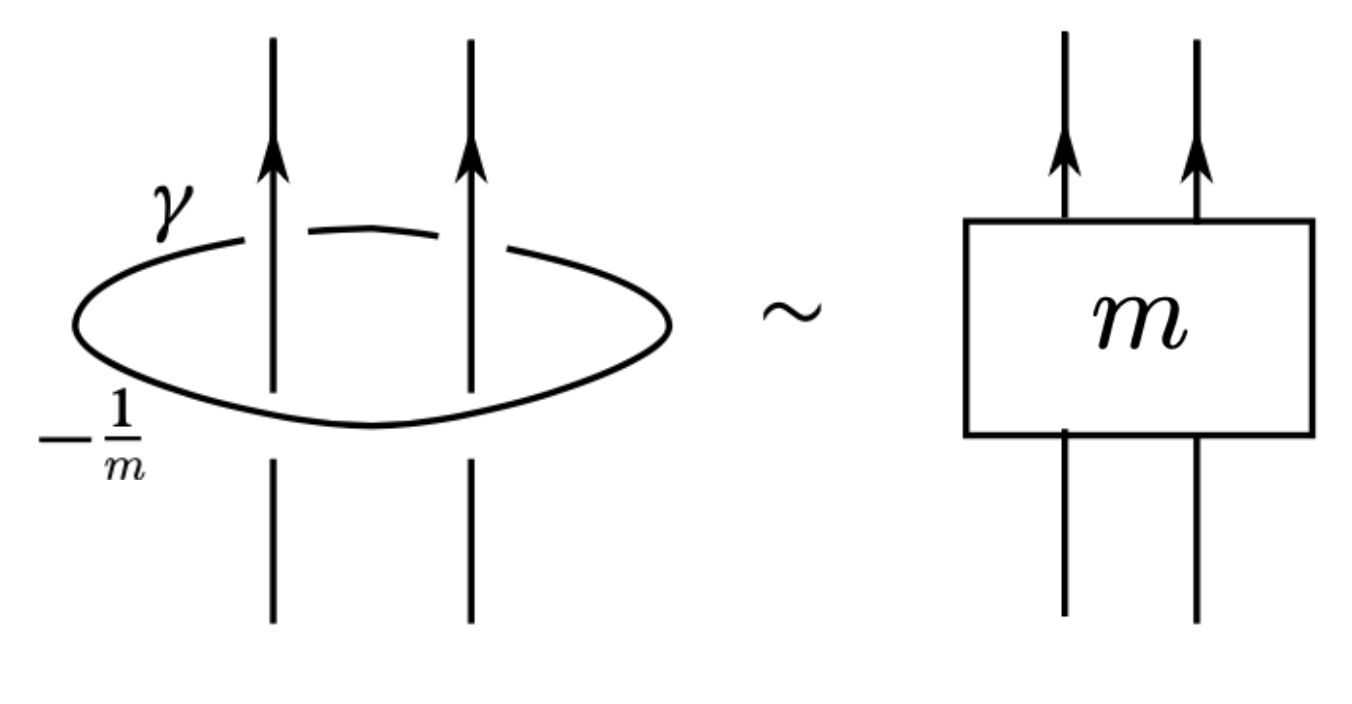}
\end{center}
\caption{Surgery description of twisting operation}\label{surgerytwist}
\end{figure}
\begin{figure}[h]
\centering
\begin{center}
\includegraphics[scale=0.4]{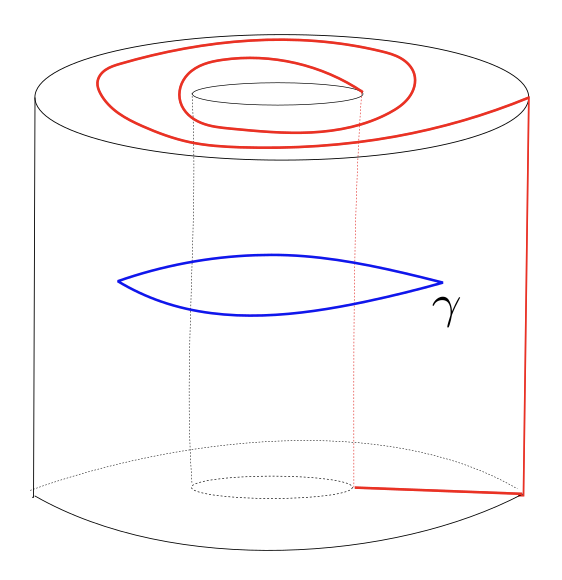}
\end{center}
\caption{$m-$th power of Dehn twist on the boundary and meridian of $-\frac{1}{m}$ surgery}\label{Dehntwistsurgery}
\end{figure}

There is a standard process to turn a bridge diagram of a knot into a multipointed Heegaard diagram. For $i=1,\dots,T$, let $\alpha_i$ be the boundary of a tubular neighborhood of the arc $a_i$ and $\beta_i$ be the boundary of a tubular neighborhood of $b_i$. Set $\bz=(z_0,z_1,\dots,z_T)$; $\bw=(w_0,w_1,\dots,w_T)$; $\balpha=(\alpha_1,\dots,\alpha_N)$; and $\bbeta=(\beta_1,\dots,\beta_N)$. Let $\mathcal{H}_m:=(S^2,\balpha,\bbeta,\bz,\bw)$ denote the multipointed Heegaard diagram constructed from the aforementioned bridge diagram of $K_m$.\\

Let $T_{\gamma} :S^2 \rightarrow S^2$ denote the positive Dehn twist along $\gamma$ and let $T_{\gamma}^{*} : Sym^{N}(S^2) \rightarrow Sym^{N}(S^2)$ be the induced map. Based on the construction of the bridge diagrams, the following relation holds between $\mathcal{H}_m$ and $\mathcal{H}_{m+1}$. If $\mathcal{H}_m=(S^2,\balpha,\bbeta,\bz,\bw)$ then $$\mathcal{H}_{m+1}=(S^2,\balpha, (T_{\gamma})^{-1} \bbeta,\bz,\bw)$$
This can be seen in Figures \ref{Heegaard3strandtwist} and \ref{Heegaard3strandtwist+}. The Figures depict the examples for the case of 3 strand twist i.e. $n=3$.\\

The negative Dehn twist introduces $(n-1)^2$ new 4-tuples to the intersection points between $\alpha_1,\dots,\alpha_{n-1}$ and $\beta_1,\dots,\beta_{n-1}$, one 4-tuple for each pair $\alpha_i$ and $\beta_j$. It doesn't change any of the other intersections. For example in  Figures \ref{Heegaard3strandtwist} and \ref{Heegaard3strandtwist+}, the intersection points of $\balpha$ and $\bbeta$ only change by addition of the four 4-tuples with subscript 2. Thus, there is a natural set injection  $$\mathbb{T}_{\alpha} \cap \mathbb{T}_{\beta}=\mathfrak{G}(\mathcal{H}_m)  \xrightarrow[]{t_m} \mathfrak{G}(\mathcal{H}_{m+1})=\mathbb{T}_{\alpha} \cap (T^{*}_{\gamma})^{-1}\mathbb{T}_{\beta}$$
If $\bx \in \mathfrak{G}(\mathcal{H}_m)$, let $\tilde{\bx} := t_m(\bx) \in \mathfrak{G}(\mathcal{H}_{m+1})$. The map $(T^{*}_{\gamma})^{-1}$ also induces a bijective map on Whitney disks from $\pi_2(\bx,\by)$ to $\pi_2(\tilde{\bx},\tilde{\by})$. If $\phi \in \pi_2(\bx,\by)$ is a Whitney disk, let $\tilde{\phi}$ denote the corresponding Whitney disk.\\

We partition the generators $\mathfrak{G}(\mathcal{H}_m)$ into $n$ families according to their vertices along $\alpha_1,\dots,\alpha_n$. The study of the effect of the map $t_m$ on these families and their grading is the main idea behind the proof.\\

In the rest, we first give a detailed proof of the case $n=3$. We will state the main steps of the proof for general $n \geq 3$ at the end of Section \ref{ProofSection}. Most of the arguments in the case of three strands directly generalise to $n \geq 3$. We use a simplified notation in the proof of $n=3$ case and in the figures. The general labelling is defined in Equations \ref{genlabelgood} and \ref{genlabelbad}.\\

\begin{figure}[h]
\centering
\begin{center}
\includegraphics[scale=0.5]{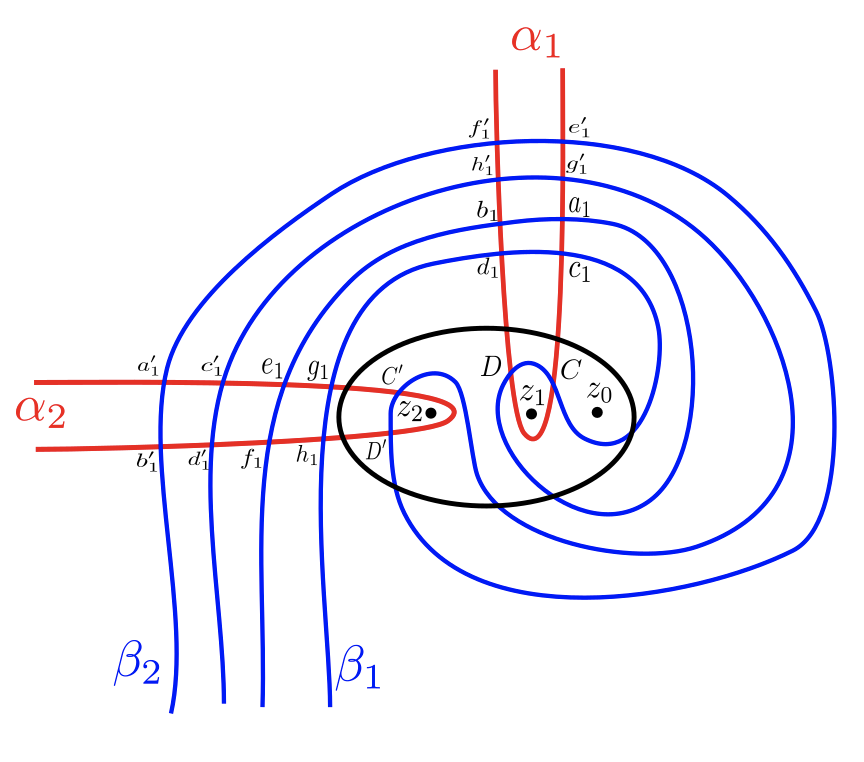}
\end{center}
\caption{Heegaard diagram for one full twist on 3 strands}\label{Heegaard3strandtwist}
\end{figure}
\begin{figure}[h]
\centering
\begin{center}
\includegraphics[scale=0.7]{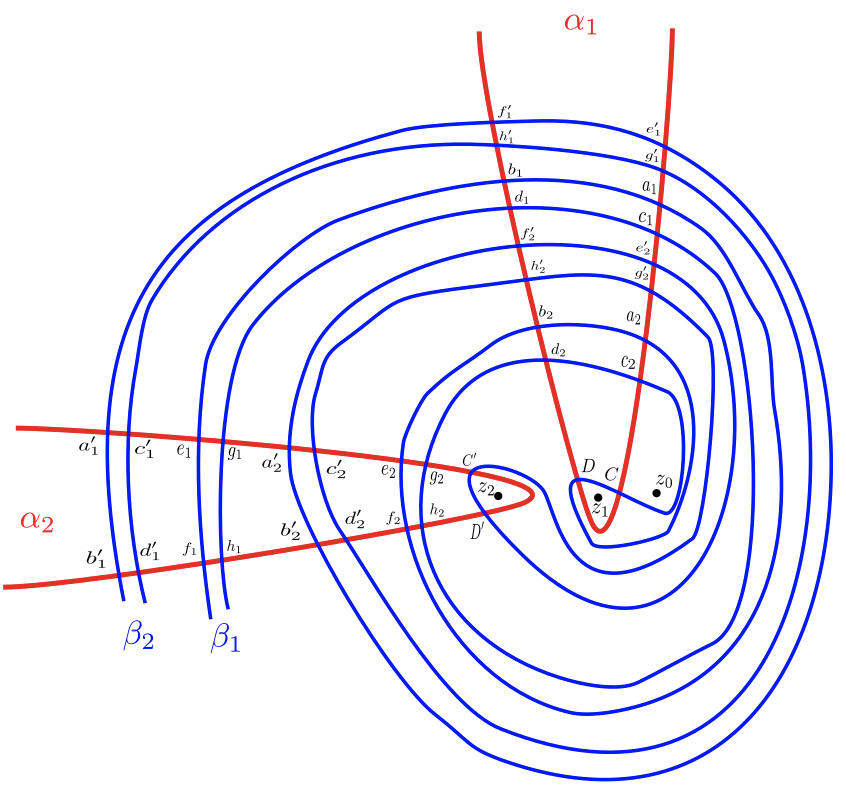}
\end{center}
\caption{Heegaard diagram for two full twist on 3 strands}\label{Heegaard3strandtwist+}
\end{figure}

The diagrams $\mathcal{H}_1$ and $\mathcal{H}_2$ of the three strand case can be seen in Figures \ref{Heegaard3strandtwist} and \ref{Heegaard3strandtwist+}. The intersection points between $\alpha_1,\alpha_2$ and $\beta_1,\beta_2$ in $\mathcal{H}_m$ are labeled as follows:  

$$\alpha_1 \cap \beta_1 \ = \ \bigcup_{i=1,\dots,m} \{a_i,b_i,c_i,d_i\} \cup \{C,D\} \ \text{ordered from top to bottom,}$$
$$\alpha_2 \cap \beta_2 \ = \ \bigcup_{i=1,\dots,m}\{a'_i,b'_i,c'_i,d'_i\} \cup \{C',D'\}\ \text{ordered from left to right,}$$ 
$$\alpha_2 \cap \beta_1 \ = \ \bigcup_{i=1,\dots,m}\{e_i,f_i,g_i,h_i\} \ \text{ordered from left to right,}$$ 
$$\alpha_1 \cap \beta_2 \ = \ \bigcup_{i=1,\dots,m}\{e'_i,f'_i,g'_i,h'_i\} \ \text{ordered from top to bottom.}$$

Let $\bx=(v_1,v_2,\dots,v_{N})$ be a generator where $v_i \in \alpha_i$. Define the following three families.\\

\noindent
In the first family, the vertices along $\alpha_1, \alpha_2$ comes from $\{C,D,C',D'\}$ i.e.
  $$\mathfrak{G}^{0}(\mathcal{H}_m):=\{\bx=(v_1,v_2,\dots,v_{N}) \ | \ v_1\in\{C,D\}, \ v_2\in\{C',D'\}\}.$$
  
\noindent 
In the second family, exactly one of the vertices along $\alpha_1$ and $\alpha_2$ comes from $\{C,D,C',D'\}$. It has the following two subfamilies, which are defined as follows:  
    $$\mathfrak{G}^{1}(\mathcal{H}_m)_{\{1\}}:=\{\bx=(v_1,v_2,\dots,v_{N}) \ | \ v_1\in\{C,D\}, \ v_2 \not\in\ \{C',D'\} \},$$
    $$\mathfrak{G}^{1}(\mathcal{H}_m)_{\{2\}}:=\{\bx=(v_1,v_2,\dots,v_{N}) \ | \ v_1\not\in\{C,D\}, \ v_2 \in\ \{C',D'\} \},$$
    $$\mathfrak{G}^{1}(\mathcal{H}_m)=\mathfrak{G}^{1}(\mathcal{H}_m)_{\{1\}} \  \cup\  \mathfrak{G}^{1}(\mathcal{H}_m)_{\{2\}}.$$
\noindent
In the third family, both vertices along $\alpha_i$ don't come from $\{C,D,C',D'\}$,
$$\mathfrak{G}^{2}(\mathcal{H}_m):=\{\bx=(v_1,v_2,\dots,v_{N}) \ | \ v_1\not\in\{C,D\}, \ v_2\not\in\{C',D'\}\}.$$
 Clearly, this is a decomposition i.e.
 $$\mathfrak{G}(\mathcal{H}_m)=\mathfrak{G}^{0}(\mathcal{H}_m) \bigsqcup \mathfrak{G}^{1}(\mathcal{H}_m) \bigsqcup \mathfrak{G}^{2}(\mathcal{H}_m).$$
 Note that the three families are similar to the families defined in Lemma 2.5. of Lambert-Cole\cite{LambertCole2016TwistingMA}. The only difference is that we suppress the distinction between \textit{twist} and \textit{negative} generators in Lambert-Cole's notation as it is not necessary for our purpose. \\
 
Now we prove Lemma \ref{Gradingchange} which is the analogue of a lemma of Lambert-Cole\cite{LambertCole2016TwistingMA}.
\begin{lemm}\label{Gradingchange}
    Choose $\bx,\by \in \mathfrak{G}(\mathcal{H}_m)$ and $\phi \in \pi_2(\bx,\by)$. Let $\tilde{\bx},\tilde{\by}$ be the corresponding generators in $\mathfrak{G}(\mathcal{H}_{m+1})$ and $\tilde{\phi}$ the corresponding Whitney disk in $\pi_2(\tilde{\bx},\tilde{\by})$.\\

\noindent (0) If $\bx \in \mathfrak{G}^{i}(\mathcal{H}_m)$ for $i=0,1,2$ then $\tilde{\bx} \in \mathfrak{G}^{i}(\mathcal{H}_{m+1})$ i.e. $t_m : x \rightarrow \tilde{x}$ respects the decomposition of $\mathfrak{G}(\mathcal{H}_m)$. Furthermore, $t_m$ respects the decomposition of $\mathfrak{G}^{1}(\mathcal{H}_{m})$ as well.\\
    
     \noindent (1) If $\bx,\by \in \mathfrak{G}^{i}(\mathcal{H}_m)$ for $i=0,2$ then 
    $$n_{\bz}(\tilde{\phi})=n_{\bz}(\phi), \ n_{\bw}(\tilde{\phi})=n_{\bw}(\phi), \ \mu(\tilde{\phi})= \mu(\phi).$$
    $$\text{Thus} \ A(\tilde{\bx})-A(\tilde{\by})=A(\bx)-A(\by), \ M(\tilde{\bx})-M(\tilde{\by})=M(\bx)-M(\by).$$
    If $i=1$ then the same holds if $\bx,\by \in \mathfrak{G}^{1}(\mathcal{H}_m)_{\{1\}}$ or $\bx,\by \in \mathfrak{G}^{1}(\mathcal{H}_m)_{\{2\}}$.\\

    \noindent(2) If $\bx \in \mathfrak{G}^{1}(\mathcal{H}_m), \ \by \in \mathfrak{G}^{0}(\mathcal{H}_m)$ then 
    $$n_{\bz}(\tilde{\phi})=n_{\bz}(\phi)+3, \ n_{\bw}(\tilde{\phi})=n_{\bw}(\phi), \ \mu(\tilde{\phi})= \mu(\phi)+2.$$
    $$\text{Thus} \ A(\tilde{\bx})-A(\tilde{\by})=A(\bx)-A(\by)+3, \ M(\tilde{\bx})-M(\tilde{\by})=M(\bx)-M(\by)+2.$$
    (3) If $\bx \in \mathfrak{G}^{2}(\mathcal{H}_m), \ \by \in \mathfrak{G}^{1}(\mathcal{H}_m)$ then
    $$n_{\bz}(\tilde{\phi})=n_{\bz}(\phi)+3, \ n_{\bw}(\tilde{\phi})=n_{\bw}(\phi), \ \mu(\tilde{\phi})= \mu(\phi)+2.$$
    $$\text{Thus} \ A(\tilde{\bx})-A(\tilde{\by})=A(\bx)-A(\by)+3, \ M(\tilde{\bx})-M(\tilde{\by})=M(\bx)-M(\by)+2.$$
    (4) If $\bx \in \mathfrak{G}^{2}(\mathcal{H}_m), \ \by \in \mathfrak{G}^{0}(\mathcal{H}_m)$ then 
    $$A(\tilde{\bx})-A(\tilde{\by})=A(\bx)-A(\by)+6, \ M(\tilde{\bx})-M(\tilde{\by})=M(\bx)-M(\by)+4.$$
\end{lemm}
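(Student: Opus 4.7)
The plan is to analyze how the domain $D(\phi) \subset S^2$ of a Whitney disk transforms under the correspondence $\phi \mapsto \tilde{\phi}$ induced by the Dehn twist $T_\gamma^{-1}$, and then to read off the changes in $n_{\bz}$, $n_{\bw}$, and $\mu$ from that transformation. Part (0) is almost immediate: the points $C, D, C', D'$ lie outside the support of $T_\gamma^{-1}$, so they are literally the same intersection points in $\mathcal{H}_m$ and $\mathcal{H}_{m+1}$. Hence conditions such as $v_1 \in \{C,D\}$ and $v_2 \in \{C',D'\}$ (and their negations) are preserved by $t_m$, which gives compatibility with the decomposition $\mathfrak{G}^0 \sqcup \mathfrak{G}^1 \sqcup \mathfrak{G}^2$ as well as with the further splitting of $\mathfrak{G}^1$.

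The key geometric input is the following. Applying $T_\gamma^{-1}$ to the $\beta$-boundary of a domain changes it by appending one copy of $\gamma$ per transverse intersection, so at the level of 2-chains one has $D(\tilde{\phi}) = D(\phi) + k \cdot D_\gamma$ up to a chain disjoint from the basepoints, where $D_\gamma$ is the disk bounded by $\gamma$ and $k$ is the algebraic intersection number of $\partial_\beta D(\phi)$ with $\gamma$. Since $D_\gamma$ contains the three $\bz$-basepoints $z_0, z_1, z_2$ but no $\bw$-basepoints, this yields $n_{\bz}(\tilde{\phi}) = n_{\bz}(\phi) + 3k$ and $n_{\bw}(\tilde{\phi}) = n_{\bw}(\phi)$. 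For part (1), both endpoints of $\partial_\beta D(\phi)$ on each $\beta_j$ lie on the same side of $\gamma$, so $k=0$ and all three quantities are preserved. For parts (2) and (3), exactly one $\beta$-arc transitions across $\gamma$, producing $k = +1$ under the chosen orientation conventions, and hence the $+3$ shift in $n_{\bz}$. The Maslov shift of $+2$ comes from applying Lipshitz's formula $\mu(\phi) = e(\phi) + n_{\bx}(\phi) + n_{\by}(\phi)$ to the difference $D_\gamma$: the Euler measure of the disk together with the two corner contributions from $\tilde{\bx}$ and $\tilde{\by}$ sum to $2$.

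Part (4) follows by stacking: any $(\bx,\by) \in \mathfrak{G}^2 \times \mathfrak{G}^0$ can be connected through an intermediate generator in $\mathfrak{G}^1$, so the shifts from (2) and (3) add to give $+6$ on the Alexander side and $+4$ on the Maslov side. The main obstacle I anticipate is the sign analysis in (2) and (3), i.e.\ verifying that $k = +1$ rather than $-1$ in every subcase; this reduces to combining the counter-clockwise orientation of $\gamma$ with the fact that the Dehn twist is negative, and is best checked by direct inspection of the local pictures in Figures \ref{Heegaard3strandtwist} and \ref{Heegaard3strandtwist+}. A secondary verification is that the Maslov contribution from $D_\gamma$ is uniformly $+2$ across all of the (up to four) subcases distinguished by which $\beta$-arc makes the transition and which $\alpha_i$ the new inside/outside vertex sits on.
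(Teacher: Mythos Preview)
Your treatment of parts (0), (1), and (4) matches the paper's: part (0) is immediate from the fact that $t_m$ fixes the intersection points $C,D,C',D'$; part (1) follows because the algebraic intersection of $\partial_\beta D(\phi)$ with $\gamma$ vanishes when the $\alpha_1,\alpha_2$-vertices of $\bx$ and $\by$ lie on matching sides of $\gamma$; and part (4) is obtained by stacking (2) and (3).

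For parts (2) and (3), however, you and the paper diverge. You argue directly that $D(\tilde\phi)$ differs from $D(\phi)$ by $k\cdot D_\gamma$ (up to something ``disjoint from the basepoints'') and then try to apply Lipshitz's formula to the difference $D_\gamma$. The paper instead uses part (1) together with the additivity of $n_{\bz}$, $n_{\bw}$, $\mu$ under concatenation
\[
\pi_2(\bx,\bxs)\times\pi_2(\bxs,\bys)\times\pi_2(\bys,\by)\to\pi_2(\bx,\by)
\]
to reduce the entire statement to a \emph{single} explicit pair $\bxs,\bys$ differing only in one coordinate ($c_m$ versus $C$), connected by a concrete bigon $D(\phi_s)$. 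One then draws $D(\tilde\phi_s)$ in $\mathcal{H}_{m+1}$, sees that it is the old bigon plus a new bigon containing $z_0,z_1,z_2$, and reads off $n_{\bz}(\tilde\phi_s)=n_{\bz}(\phi_s)+3$, $n_{\bw}$ unchanged, and $\mu(\tilde\phi_s)=\mu(\phi_s)+2$ from Lipshitz's formula applied to genuine domains.

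Your direct route is fine for $n_{\bz}$ and $n_{\bw}$, which only depend on the underlying $2$-chain on $S^2$. The gap is in the Maslov step: $D(\phi)$ and $D(\tilde\phi)$ live in different Heegaard diagrams (the $\bbeta$-curves, and hence the elementary regions and corner structure, have changed), so their ``difference'' is not literally a domain to which Lipshitz's formula applies, and $D_\gamma$ itself has no corners at generators. Your phrase ``up to a chain disjoint from the basepoints'' precisely hides the piece that could contribute to $\mu$. The paper's reduction sidesteps this: once one bigon is computed explicitly, part (1) plus additivity propagate the result to all pairs, with no need for a uniform sign or Maslov analysis across subcases.
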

\begin{proof}
The transition from $\mathcal{H}_m$ to $\mathcal{H}_{m+1}$ only adds four new 4-tuples of intersection points, one to each $\alpha_i \cap \beta_j \ \ i,j \in \{1,2\}$. It doesn't change any of the other intersection points in $\mathcal{H}_m$, and as a result, transformation $t_m: x \rightarrow \tilde{x}$ respects the decomposition of $\mathfrak{G}(\mathcal{H}_m)$ and $\mathfrak{G}(\mathcal{H}_{m+1})$.\\

Let $D(\phi)$ be the domain in $\mathcal{H}_m$ corresponding to $\phi$. Let the Heeggaard state $\bx=(v_1,\dots,v_N)$ and $\by=(w_1,\dots,w_N)$ where $v_i,w_i \in \alpha_i$. Orient $\gamma$ counterclockwise, which means that the disk containing $z_0,z_1\text{ and }z_2$ is inside of $\gamma$. Note that $\partial D(\phi) \cap \bbeta = \bx-\by$, and $v_i$ or $w_i$ can be inside $\gamma$  if and only if $i=1,2$. In case (1), $v_i,w_i$ for $i=1,2$ are either both inside $\gamma$ or both outside $\gamma$ i.e. $\gamma$ doesn't separate $v_i,w_i$ for $i=1,2$. As a result, the algebraic intersection of the $\beta-$components of $\partial D(\phi)$ with $\gamma$ is $0$. Thus, the intersection numbers $n_z$ and $n_w$ and the Maslov index $\mu$ are unchanged by the Dehn twist. The conclusion about the Alexander and Maslov gradings follows from their definitions.\\

We can use the statement of the case (1) to reduce the proof of other cases to special pairs of Heegaard states and special Whitney disks.  This follows from the additivity of $n_z$, $n_w$ and $\mu$ under the composition of Whitney disks 
$$\pi_2(\bx,\bxs) \times \pi_2(\bxs,\bys) \times \pi_2(\bys,\by) \rightarrow \pi_2(\bx,\by).$$
In case (2), we can let $\bys=(C,C',v_{s_3},\dots,v_{s_N})$ be any Heegaard state in $\mathfrak{G}^{0}(\mathcal{H}_m)$ containing intersection points $C,C'$. Consider the Heegaard state $\bxs=(c_n,C',v_{s_3},\dots,v_{s_N})) \in \mathfrak{G}(\mathcal{H}_m)_{\{2\}} \subset  \mathfrak{G}^{1}(\mathcal{H}_m)$. Let $D(\phi_s)$ be the bigon connecting $c_n$ and $C$. Due to the definition of $\bxs$ and $\bys$, the bigon $D(\phi_s)$ is the domain of a Whitney disk  $\phi_s \in \pi_2(x_s,y_s)$. Now we need to look at the domain $D(\tilde{\phi_s})$ in $\mathcal{H}_{m+1}$. Since the Dehn twist is a local operation we only need to analyze the case $m=1$ and the result will hold in the general case. This is done in Figure \ref{domaintilde}. We can see that $D(\tilde{\phi_s})$ will be the sum of the bigon connection $c_{m+1}$ and $C$ and the bigon connecting $c_m$ and $c_{m+1}$. The second bigon contains $z_0,z_1,z_2$ and none of the basepoints in $\bw$. Furthermore, the second bigon has one acute right-angled corner and one obtuse right-angled corner (similar to Figure \ref{bigon}). Lipshitz's combinatorial formula for the Maslov index will give us that $\mu(\tilde{\phi_s})=3=\mu(\phi_s)+2$. This leads to the desired conclusion.\\

We can go through the same process for state $\bxs=(C,c'_n,v_{s_3},\dots,v_{s_N}) \in \mathfrak{G}(\mathcal{H}_m)_{\{1\}} \subset  \mathfrak{G}^{1}(\mathcal{H}_m)$ to complete the proof of case (2). The proof of case (3) is similar and case (4) follows from case (2) and (3) using the composition of Whitney disks.\\
\begin{figure}[h]
\centering
\begin{center}
\includegraphics[scale=0.5]{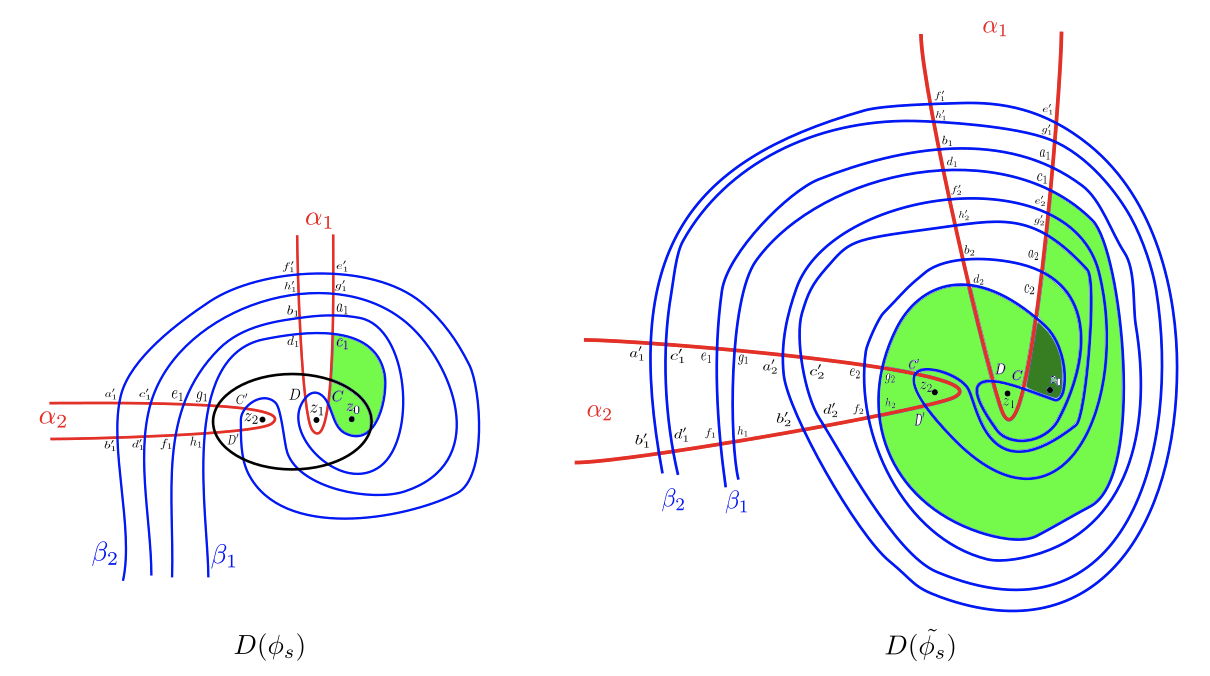}
\end{center}
\caption{the domains $D(\phi_s)$ and $D(\tilde{\phi_s})$ (The light green regions have multiplicity $+1$ while the dark green region has multiplicity $+2$)}\label{domaintilde}
\end{figure}
\end{proof}
Let us set a notation before starting the proof of Theorem \ref{MainTheorem}. Let $t^{m+1}_{m} := t_m : \mathfrak{G}(\mathcal{H}_m) \rightarrow \mathfrak{G}(\mathcal{H}_{m+1})$. We need to define a notation for the compositions of these maps as follows 
$$\forall \ m_1 \leq m_2 \ \ t^{m_2}_{m_1}:= t_{m_2-1} \circ \dots \circ t_{m_1+1} \circ t_{m_1} : \mathfrak{G}(\mathcal{H}_{m_1}) \rightarrow \mathfrak{G}(\mathcal{H}_{m_2}).$$
We are now ready to prove Theorem \ref{MainTheorem}. 
\begin{proof}[Proof of Theorem \ref{MainTheorem}]
We want to use map $t_m$ to construct the stabilization isomorphisms. The main issue is that $t_m$ isn't surjective on $\mathfrak{G}^{0}(\mathcal{H}_m)$ and $\mathfrak{G}^{1}(\mathcal{H}_m)$. Any generator $\bx_{m+1} \in \mathfrak{G}(\mathcal{H}_{m+1}) \setminus t_m(\mathfrak{G}(\mathcal{H}_{m}))$ includes an intersection points $s_{m+1}$ for $s \in \{a,b,c,d,e,f,g,h\} \cup \{a',b',c',d',e',f',g',h'\}$. These generators can never take the maximum Alexander grading, because there is a bigon connection $s_{m}$ to $s_{m+1}$ with $n_z=3$ and $n_w=0$ (this bigon can be seen in Figure \ref{bigon}). This means that  Heegaard state $\bx_{m} = \bx_{m+1} \setminus s_{m+1} \cup s_{m}$ satisfies the equation 
$$A(\bx_{m}) - A(\bx_{m+1}) = 3.$$
This means that elements of $\mathfrak{G}(\mathcal{H}_{m+1}) \setminus t_m(\mathfrak{G}(\mathcal{H}_{m}))$ are not in the top three Alexander gradings of $\mathfrak{G}(\mathcal{H}_{m+1})$. In fact, one can define ${\bx_{1} = \bx_{m+1} \setminus s_{m+1} \cup s_{1}}$ which satisfies 
$$A(\bx_{1}) - A(\bx_{m+1}) = 3m.$$
which means elements of $\mathfrak{G}(\mathcal{H}_{m+1}) \setminus t_m(\mathfrak{G}(\mathcal{H}_{m}))$ are not in the top $3m$ Alexander gradings of $\mathfrak{G}(\mathcal{H}_{m+1})$.\\

Let $L_m,R_m$ denote the lowest and highest Alexander degree of a generator in $\widehat{CFK}(\mathcal{H}_m)$. We claim that for any $k \in \mathbb{N}$ if $m$ is high enough then 
the top $k$ groups in the chain complex (with respect to Alexander grading) are generated by elements in $\mathfrak{G}^{2}(\mathcal{H}_m)$ i.e. 
\begin{equation}\label{saturation}
    \forall \  0 \leq j \leq k-1 \ \ \widehat{CFK}(\mathcal{H}_m,R_m-j) \subset \langle  \mathfrak{G}^{2}(\mathcal{H}_m)\rangle.
\end{equation}
First of all, if $3m \geq k$, due to the argument of the last paragraph, the top $k$ Alexander gradings of $\mathfrak{G}(\mathcal{H}_{m+1})$ will be generated by elements in the image of $t_m$. Due to Lemma \ref{Gradingchange}, the (signed) distance of the Alexander gradings of generators inside $\mathfrak{G}^{2}(\mathcal{H}_m)$ and other generators grows linearly under the map $t_m$. Furthermore $t_m$ respects the decomposition. As a result, there exist $m_0$ such that at least one of the generators of $\underset{j=0,\dots,k-1}\bigcup \widehat{CFK}(\mathcal{H}_{m_0},R_{m_0}-j)$ is inside $t_{m_0-1}(\mathfrak{G}^{2}(\mathcal{H}_{m_0-1})) \subset \mathfrak{G}^{2}(\mathcal{H}_{m_0})$ i.e. 
$$\exists \  \bx_0 \in \mathfrak{G}^{2}(\mathcal{H}_{m_0-1}) \ \text{s.t.} \ \widetilde{\bx_0} = t_{m_0-1}(\bx_0) \in \underset{j=0,\dots,k-1}\bigcup \widehat{CFK}(\mathcal{H}_{m_0},R_{m_0}-j)$$

We claim that Equation \ref{saturation} holds for $m\geq m_0+\lceil\frac{2k}{3}\rceil$. This again comes from Lemma \ref{Gradingchange} and the linear growth of the relative Alexander grading with slope at least $3$ under the map $t_m$. To be precise, for any $\by_0 \in \mathfrak{G}(\mathcal{H}_{m_0}) \setminus \mathfrak{G}^{2}(\mathcal{H}_{m_0})$, we have the following inequality
$$A(t^{m}_{m_0}(\widetilde{\bx_0})) - A(t^{m}_{m_0}(\by_0)) \geq 3(m-m_0) + A(\widetilde{\bx_0}) - A(\by_0) \geq 2k + A(\widetilde{\bx_0}) - A(\by_0),$$
which comes from repeated use of Lemma \ref{Gradingchange}. If ${\by_0 \in \underset{j=0,\dots,k-1}\bigcup \widehat{CFK}(\mathcal{H}_{m_0},R_{m_0}-j)}$ then $A(t^{m}_{m_0}(\widetilde{\bx_0})) - A(t^{m}_{m_0}(\by_0)) \geq k$ which means $t^{m}_{m_0}(\by_0)$ is not in the top $k$ Alexander gradings of $\widehat{CFK}(\mathcal{H}_{m})$. Since this holds for any generator $\by_0 \in \mathfrak{G}(\mathcal{H}_{m_0}) \setminus \mathfrak{G}^{2}(\mathcal{H}_{m_0})$, this is equivalent to Equation \ref{saturation}.\\

Now let $m' \geq m$. Due to the argument in the first paragraph, ${t^{m'}_{m}:\mathfrak{G}(\mathcal{H}_{m}) \rightarrow \mathfrak{G}(\mathcal{H}_{m'})}$ is a bijection in top $3m'$ Alexander gradings. As a result, the map $t^{m'}_{m}$ is also a bijection on top $k$ Alexander gradings. Furthermore, due to Part (1) of Lemma \ref{Gradingchange} and the argument in the previous paragraph, the map $t^{m'}_{m}$ also preserves relative Alexander and Maslov gradings. The induced map 
$$t^{m'}_{m} \ : \ \underset{j=0,\dots,k-1}\bigoplus \widehat{CFK}(\mathcal{H}_{m},R_{m}-j) \longrightarrow \underset{j=0,\dots,k-1}\bigoplus \widehat{CFK}(\mathcal{H}_{m'},R_{m'}-j)$$
is an isomporphism of Abelian groups and decomposes to $k$ components with respect to Alexander grading i.e. 
$$t^{m'}_{m}(j) \ : \ \widehat{CFK}(\mathcal{H}_{m},R_{m}-j) \longrightarrow \widehat{CFK}(\mathcal{H}_{m'},R_{m'}-j).$$
Now, we prove that these maps are also chain maps. Fix $\bx,\by \in \mathfrak{G}^2_{m}$ and some $\phi \in \pi_2(\bx,\by)$ with $n_z(\phi)=0$ and $\mu(\phi)=1$. We can choose an open neighborhood $W$ of $\overline{D(\phi)}$ to be disjoint from the curve $\gamma$. This is due to the fact that any domain intersecting $\gamma$ but not $\bz$ must have a corner inside $\gamma$ i.e. in one of the points $C,D,C',D'$ (See Figure \ref{corners}). Thus we can assume that the support of $T_{\gamma}$ is disjoint from $W$ and that the support of $T_{\gamma}^{*}$ is disjoint from $Sym^N(W)$. Let $u \in \mathcal{M}_{J_s}(\phi)$ be a holomorphic representative. Using the Localization Principle of Rasmussen (Lemma 9.9, \cite{Rasmussenthesis}), we can deduce that the image of $u$ is contained in ${Sym^N(W) \subset Sym^N(S^2)}$. Since $T_{\gamma}^*$ is the identity on $Sym^N(W)$, there will be a one-to-one bijection between $\mathcal{M}_{J_s}(\phi)$ and $\mathcal{M}_{J_s}(\tilde{\phi})$. After quotienting the action of $\mathbb{R}$, we will have $$t_m(\widehat{\partial}\bx)=\widehat{\partial}\tilde{\bx}=\widehat{\partial}t_m(\bx).$$ One can easily deduce that $t^{m'}_{m}$ restricted to top $k$ Alexander gradings is also a chain map.\\

The induced maps on the homology are also isomorphisms i.e. 
$$(t^{m'}_{m}(j))_{*} \ : \ \widehat{HFK}(K_m,R_{m}-j) \xrightarrow{\qquad \cong \qquad}\widehat{HFK}(K_{m'},R_{m'}-j).$$
These isomorphisms are almost the desired statement of Theorem \ref{MainTheorem}. The only remaining issue is the comparison of $R_m$ and $g(K_m)$. Without this part, all these isomorphisms might be between trivial groups. In fact, a direct comparison of $R_m$ and $g(K_m)$ seem not to be possible, as we are not aware of any bounds for $R_m$. However, it is possible to compare their growth.\\

The growth of genus is known by the work of Baker and Montegi (Theorem 2.1., \cite{Baker2017SeifertVS}). They prove that the genus of $K_m$ grows linearly with respect to $m$ with the slope $\frac{n(n-1)}{2}$ (in the general case when the twisting happens on $n$ parallel strands). This coincides with the maximum slope growth of relative Alexander grading (Lemma \ref{Gradingchange}). We continue our discussion on $n=3$, but this easily extends to the general case.\\

Notice that we can find a bound for the growth of the stabilization region. The main condition for $m$ in the above argument was Equation \ref{saturation} i.e. $${\underset{j=0,\dots,k-1}\bigoplus \widehat{CFK}(\mathcal{H}_{m},R_{m}-j)} \ \subset \ \langle \mathfrak{G}^{2}(\mathcal{H}_m) \rangle.$$ We argued that this condition would result in the stabilization of the top $k$ homology groups with respect to Alexander grading. Again due to the linear growth of the relative Alexander grading in Lemma \ref{Gradingchange} and our discussion about the Alexander grading of elements outside the image of $t_m$, we have 
\begin{equation*}
\begin{split}
    & \underset{j=0,\dots,k-1}\bigoplus \widehat{CFK}(\mathcal{H}_{m},R_{m}-j) \subset \langle \mathfrak{G}^{2}(\mathcal{H}_m) \rangle \\
    \Longrightarrow &\underset{j=0,\dots,k+2}\bigoplus \widehat{CFK}(\mathcal{H}_{m+1},R_{m+1}-j) \subset  \langle t_m(\mathfrak{G}^{2}(\mathcal{H}_{m}))\rangle \subset \langle \mathfrak{G}^{2}(\mathcal{H}_{m+1})\rangle.
\end{split}
\end{equation*}
This means that the stabilization region grows by (at least) a linear function with slope $3$.\\

Now let $m_1=m+d$ and assume that $R_{m_1}=R_{m}+3d+r_{m_1}$. We know that the top $k+3d$ homology groups are stabilised after applying $m_1$ twists i.e. all of the Alexander gradings higher than $R_{m}+r_{m_1}-k+1$ are stabilized.\\ 

We also know that $g(K_{m_1})=g(K_m)+3d$. Note that based on Lemma \ref{Gradingchange} we have 
$$(R_{m_1}-L_{m_1}) \leq (R_m-L_m) +6d.$$
As a result, we have 
\begin{equation*}\label{thepull}
    L_m -L_{m_1} \leq 3d-r_{m_1}.
\end{equation*}
On the other hand, we know that $\widehat{HFK}(K_{m_1},-g(K_{m_1}))$ is non-trivial. As a result, $L_{m_1} \leq -g(K_{m_1}) = -g(K_m) -3d$.  In combination with Equation \ref{thepull}, we can deduce
$$L_m+g(K_m) \leq -r_{m_1} \Longrightarrow r_{m_1} \leq -L_m -g(K_m).$$
This upper bound on $r_{m_1}$ is independent of $m_1$, as a result, we have an upper bound on $R_{m}+r_{m_1}-k+1$, which is independent of $m_1$. Now note that for large enough $m_1$, all of the knot Floer homology groups $$\widehat{HFK}(K_{m_1},j)\ \text{for} \  j \in [R_{m}-L_m -g(K_m)-k+1 , g(K_{m_1})]$$ are stabilised. This shows that the stabilisation phenomena reaches the non-trivial extremal knot Floer homologies which is the main statement of Theorem.  \ref{MainTheorem}.\\
\begin{minipage}{\linewidth}
\begin{center}
\includegraphics[scale=0.4]{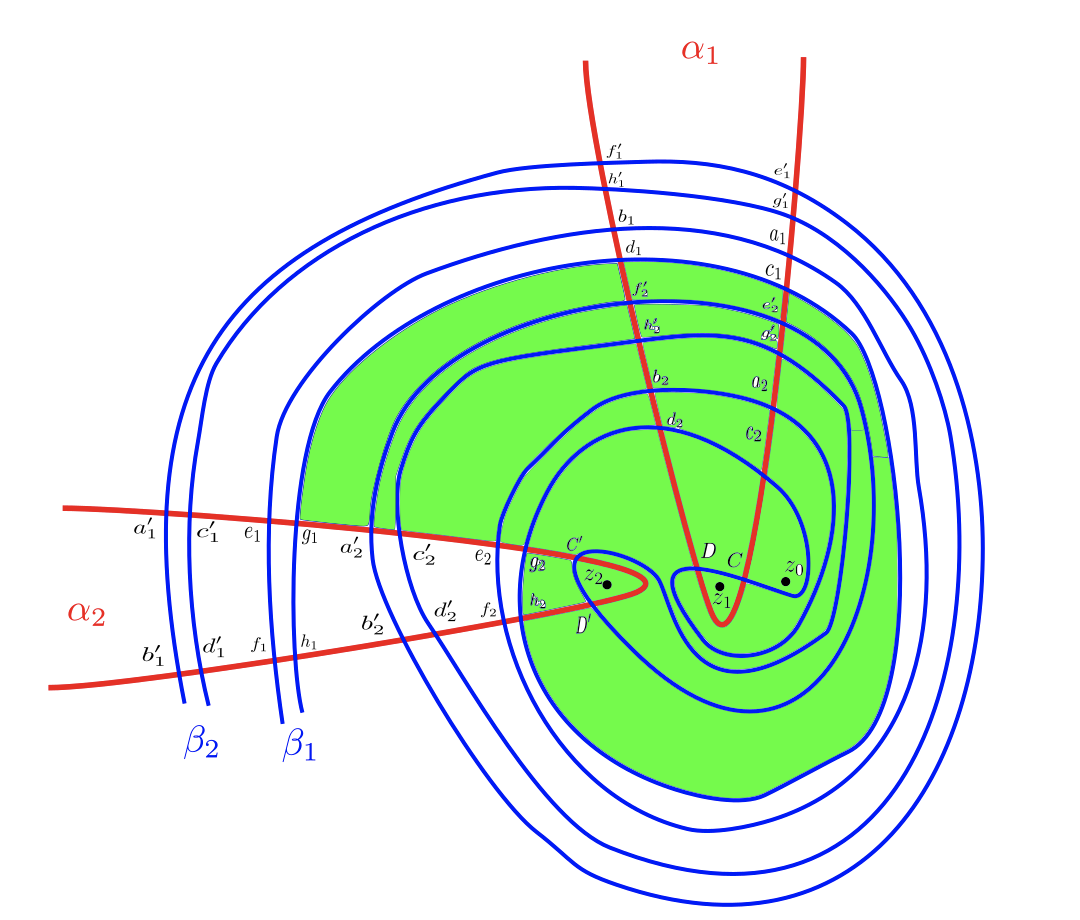}
\captionof{figure}{A bigon connecting $g_1$ to $g_2$}\label{bigon}
\end{center}
\end{minipage}
\begin{figure}[h]
\centering
\begin{center}
\includegraphics[scale=0.2]{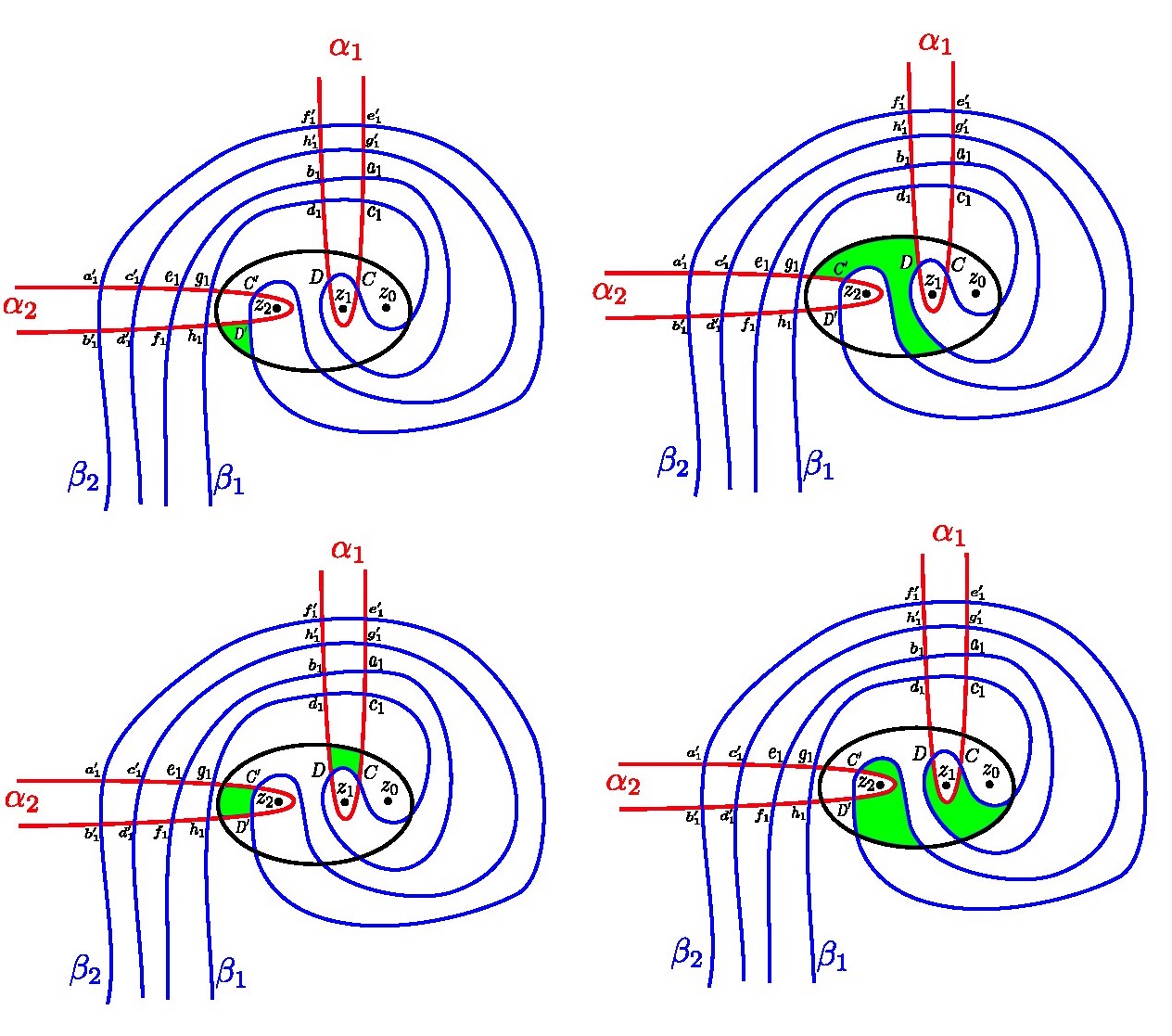}
\end{center}
\caption{Local picture of domains intersecting $\gamma$ but not~$\bz$}\label{corners}
\end{figure}

Now we state the main steps of the proof for the general case i.e. $n\geq 3$. First of all, note that we still can label the intersection points in $\alpha_i \cap \beta_j \subset \mathcal{H}_m$ as follows: 
\begin{equation}\label{genlabelgood}
        \forall \ 1\leq i\leq n-1  \ : \  \alpha_i \cap \beta_i \ = \ \bigcup_{k=1,\dots,m} \{a^{i}_k,b^{i}_k,c^{i}_k,d^{i}_k\} \cup \{C^{i},D^{i}\},
\end{equation}
\begin{equation}\label{genlabelbad}
       \forall \ 1\leq i<j \leq n-1  \ : \  \alpha_i \cap \beta_j \ = \ \bigcup_{k=1,\dots,m} \{e^{ij}_k,f^{ij}_k,g^{ij}_k,h^{ij}_k\},
\end{equation}
where $C^{i},D^{i}$ are vertices of the bigon formed between $\alpha_i$ and $\beta_i$ in the disk bound by $\gamma$ and contains the basepoint $z_i$.\\

Now we can proceed and decompose the elements of $\mathfrak{G}(\mathcal{H}_m)$ to $n$ families 
$$\mathfrak{G}(\mathcal{H}_m)=\bigsqcup_{0\leq i\leq n-1} \mathfrak{G}^{i}(\mathcal{H}_m),$$
where $\mathfrak{G}^{i}(\mathcal{H}_m)$ is the subset of Heegaard states which include $n-1-i$ intersection points from $\bigcup\limits_{1 \leq j \leq n-1} \{C^j ,D^j\}$. Note that this coincides with the decomposition constructed before Lemma \ref{Gradingchange}. One can further decompose each of these families as follows. Let $\bx=(v_1,\dots,v_N)$ be a Heegaard state where $v_i \in \alpha_i$, then
$$\mathfrak{G}^{i}(\mathcal{H}_m) =\bigsqcup\limits_{\substack{A \subset [n] \\ |A|=n-1-i}} \mathfrak{G}^{i}(\mathcal{H}_m)_{A} :=\bigsqcup\limits_{\substack{A \subset [n] \\ |A|=n-1-i}} \{ \bx \in \mathfrak{G}^{i}(\mathcal{H}_m) \ | \  \forall k \in A \ , \ v_k \in \{C^k,D^k\}\},$$
where $[n]:=\{0,1,\dots,n-1\}$.\\

The following lemma is the generalization of Lemma \ref{Gradingchange}.\\

\begin{lemm}\label{Gengradingchange}
    Choose $\bx,\by \in \mathfrak{G}^{i}(\mathcal{H}_m)$ and $\phi \in \pi_2(\bx,\by)$. Let $\tilde{\bx},\tilde{\by}$ be the corresponding generators in $\mathfrak{G}^{i}(\mathcal{H}_{m+1})$ and $\tilde{\phi}$ the corresponding Whitney disk in $\pi_2(\tilde{\bx},\tilde{\by})$ (the image of map $t_m$).\\

    \noindent (0) If $\bx \in \mathfrak{G}^{i}(\mathcal{H}_m)_{A}$, then $\tilde{\bx} \in \mathfrak{G}^{i}(\mathcal{H}_{m+1})_{A}$ i.e. $t_m : x \rightarrow \tilde{x}$ respects the decomposition of $\mathfrak{G}(\mathcal{H}_m)$.\\

    \noindent(1) If $\bx,\by \in \mathfrak{G}^{i}(\mathcal{H}_m)_{A}$, then
    $$n_{\bz}(\tilde{\phi})=n_{\bz}(\phi), \ n_{\bw}(\tilde{\phi})=n_{\bw}(\phi), \ \mu(\tilde{\phi})= \mu(\phi).$$
    $$\text{Thus} \ A(\tilde{\bx})-A(\tilde{\by})=A(\bx)-A(\by), \ M(\tilde{\bx})-M(\tilde{\by})=M(\bx)-M(\by).$$
\noindent(2) If $\bx \in \mathfrak{G}^{i+1}(\mathcal{H}_m)_{A'}$ and $\by \in \mathfrak{G}^{i}(\mathcal{H}_m)_{A}$ such that $A' \subset A$  and ${|A|=|A'|+1}$, then 
 $$n_{\bz}(\tilde{\phi})=n_{\bz}(\phi)+n, \ n_{\bw}(\tilde{\phi})=n_{\bw}(\phi), \ \mu(\tilde{\phi})= \mu(\phi)+2.$$
    $$\text{Thus} \ A(\tilde{\bx})-A(\tilde{\by})=A(\bx)-A(\by)+n, \ M(\tilde{\bx})-M(\tilde{\by})=M(\bx)-M(\by)+2.$$
\noindent(3) If $\bx \in \mathfrak{G}^{n-1}(\mathcal{H}_m)$ and $\by \in \mathfrak{G}^{0}(\mathcal{H}_m)$, then $$A(\tilde{\bx})-A(\tilde{\by})=A(\bx)-A(\by)+n(n-1), \ M(\tilde{\bx})-M(\tilde{\by})=M(\bx)-M(\by)+2(n-1).$$
\end{lemm}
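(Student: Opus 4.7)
The proof parallels the structure of Lemma \ref{Gradingchange}, with the three-strand-specific computations replaced by their $n$-strand analogues. Part (0) is immediate from the same observation used for $n=3$: the inverse Dehn twist $(T_\gamma)^{-1}$ introduces $(n-1)^2$ new four-tuples of intersections, one per pair $(\alpha_i,\beta_j)$ with $1\le i,j\le n-1$, and leaves every old intersection point fixed as a point of $S^2$. Hence the map $t_m$ sends each vertex of a Heegaard state to itself, so the set $A$ indicating which $\alpha_k$-coordinates belong to $\{C^k,D^k\}$ is preserved. For Part (1), orient $\gamma$ counter-clockwise so that the disk it bounds contains precisely $z_0,\ldots,z_{n-1}$. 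For each $1\le k\le n-1$ the $\alpha_k$-vertex of a Heegaard state lies inside $\gamma$ if and only if it equals $C^k$ or $D^k$; since $\bx,\by\in\mathfrak{G}^i(\mathcal{H}_m)_A$ share the same $A$, the corresponding vertices on every $\alpha_k$ are either both inside or both outside $\gamma$. Consequently the algebraic intersection of the $\beta$-boundary of $D(\phi)$ with $\gamma$ vanishes, and the standard Dehn twist identities yield $n_{\bz}(\tilde\phi)=n_{\bz}(\phi)$, $n_{\bw}(\tilde\phi)=n_{\bw}(\phi)$, and $\mu(\tilde\phi)=\mu(\phi)$.

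For Part (2), additivity of $n_{\bz}$, $n_{\bw}$, and $\mu$ under composition of Whitney disks, together with Part (1), reduces the claim to one convenient pair of Heegaard states and one Whitney disk. Write $A = A'\sqcup\{k\}$, choose any $\bys\in\mathfrak{G}^i(\mathcal{H}_m)_A$ whose $\alpha_k$-vertex is $C^k$, and set $\bxs:=(\bys\setminus\{C^k\})\cup\{c_m^k\}$, which lies in $\mathfrak{G}^{i+1}(\mathcal{H}_m)_{A'}$. The bigon from $c_m^k$ to $C^k$ (Figure \ref{bigon}) yields a Whitney disk $\phi_s$ with $n_{\bz}(\phi_s)=n_{\bw}(\phi_s)=0$ and $\mu(\phi_s)=1$. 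After applying $(T_\gamma^*)^{-1}$, the new domain $D(\tilde\phi_s)$ decomposes into the bigon from $c_{m+1}^k$ to $C^k$ plus a bigon from $c_m^k$ to $c_{m+1}^k$ that wraps once around the entire twist region. This latter bigon sweeps over all $n$ basepoints $z_0,\ldots,z_{n-1}$, misses every $\bw$-basepoint, and exhibits one acute and one obtuse right-angled corner; Lipshitz's combinatorial formula contributes $+2$ to the Maslov index. Hence $n_{\bz}(\tilde\phi_s)=n_{\bz}(\phi_s)+n$, $n_{\bw}(\tilde\phi_s)=n_{\bw}(\phi_s)$, and $\mu(\tilde\phi_s)=\mu(\phi_s)+2$, as claimed. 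Part (3) then follows by iterating Part (2) along a chain of Whitney disks realising successive transitions $\mathfrak{G}^0\to\mathfrak{G}^1_{A_1}\to\cdots\to\mathfrak{G}^{n-1}$, accumulating $+n$ in Alexander grading and $+2$ in Maslov grading at each of the $n-1$ steps.

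The main obstacle is the local combinatorial analysis underlying Part (2): one must confirm that for \emph{every} strand index $k$, the bigon from $c_m^k$ to $c_{m+1}^k$ produced by the Dehn twist really encloses precisely the $n$ basepoints $z_0,\ldots,z_{n-1}$ (and none of $\bw$), independent of $k$. The geometric input is that $(T_\gamma)^{-1}$ drags each arc $\beta_k$ once around the disk bounded by $\gamma$, so the resulting bigon bounded by a sub-arc of $(T_\gamma)^{-1}\beta_k$ and a short segment of $\alpha_k$ fills that disk exactly once; the $\bw$-basepoints lie outside by the bridge construction, which places them on the side of $\alpha_k$ opposite to the twist region. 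Granted this picture, which generalises Figure \ref{domaintilde} verbatim, the Maslov index computation and the additivity argument for Part (3) are formally identical to the $n=3$ case.
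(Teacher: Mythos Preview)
Your argument is correct and follows the paper's own proof essentially verbatim: the same reduction via additivity to the bigon $\phi_s$ from $c_m^{k}$ to $C^{k}$, the same decomposition of $D(\tilde\phi_s)$ into the bigon $c_{m+1}^{k}\to C^{k}$ plus the bigon $c_m^{k}\to c_{m+1}^{k}$ sweeping over $z_0,\ldots,z_{n-1}$, and the same iteration of Part~(2) for Part~(3). One small discrepancy worth noting: the paper records that the original bigon $\phi_s$ already contains the single basepoint $z_k$ (so $n_{\bz}(\phi_s)=1$ rather than $0$, and the appropriate figure reference is Figure~\ref{domaintilde} rather than Figure~\ref{bigon}), but since only the increment $n_{\bz}(\tilde\phi_s)-n_{\bz}(\phi_s)=n$ enters the grading formulas, your conclusion is unaffected.
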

The proof is similar to the proof of Lemma \ref{Gradingchange}. Case (0) just follows from the fact that Dehn twists along the curve $\gamma$ don't change the existing intersection points. Case (1) follows from the fact that the algebraic intersection of the $\beta-$ components of $\partial D(\phi)$ with $\gamma$ is $0$. Thus, the intersection numbers $n_{\bz}$ and $n_{\bw}$ and the Maslov index $\mu$ are unchanged by the Dehn twist. Case~(1) and the additivity of $n_{\bz}$, $n_{\bw}$ and $\mu$ under the composition of Whitney disks reduce Case (2) to special pairs of Heegaard states and special Whitney disks. Assume $A \setminus A' = \{h\}$. Similar to the Lemma \ref{Gradingchange}, this special pair $\bxs=(v_{s_1},\dots,v_{s_N})$ and $\bys=(w_{s_1},\dots,w_{s_N})$ are constructed by setting 
\begin{equation*}
    v_{s_h} = c^{h}_{m} \ , \ w_{s_h} = C^{h} \ , \text{and} \ 
    v_{s_j} =  w_{s_j}  \ \forall \ j \neq h
\end{equation*}
and taking $D(\phi_s)$ to be the bigon connecting them. Similar to Figure \ref{domaintilde}, this bigon only contains basepoint $z_h$ with multiplicity $1$. The domain $D(\tilde{\phi_s})$ in $\mathcal{H}_{m+1}$ will be the sum of the bigon connection $c^{h}_{m+1}$ and $C^{h}$ and the bigon connecting $c^{h}_m$ and $c^{h}_{m+1}$. The second bigon contains $z_0,z_1,\cdots,z_{n-1}$ and none of the basepoints in $\bw$. Furthermore, the second bigon has one acute right-angled corner and one obtuse right-angled corner (similar to Figure \ref{bigon}). The computations will follow similarly to the case of $n=3$. \\

Now the next step of the proof is to analyze generators ${\bx_{m+1} \in \mathfrak{G}(\mathcal{H}_{m+1}) \setminus t_m(\mathfrak{G}(\mathcal{H}_{m}))}$. Any such generator includes an intersection point in 
$$\bigcup_{1 \leq i \leq n-1}\{a^{i}_{m+1},b^{i}_{m+1},c^{i}_{m+1},d^{i}_{m+1}\} \ \ \bigsqcup \bigcup_{\ 1\leq i<j \leq n-1 }\{e^{ij}_{m+1},f^{ij}_{m+1},g^{ij}_{m+1},h^{ij}_{m+1}\}$$
Similar to Figure \ref{bigon}, there will be a bigon connecting $s_{m}$ to $s_{m+1}$ for all $s \in \{a^{i},b^{i},c^{i},d^{i},e^{ij},f^{ij},g^{ij},h^{ij}\}$. More generally bigons connecting $s_i$ to $s_{i+1}$ for $1 \leq i \leq m$, each containing basepoints $z_0, \dots, z_{n-1}$. This means that elements of $\mathfrak{G}(\mathcal{H}_{m+1}) \setminus t_m(\mathfrak{G}(\mathcal{H}_{m}))$ are not in the top $nm$ Alexander gradings of $\mathfrak{G}(\mathcal{H}_{m+1})$. This means that for $m' \geq m \geq \frac{k}{n} $, the map ${t^{m'}_{m}: \mathfrak{G}(\mathcal{H}_{m}) \rightarrow  \mathfrak{G}(\mathcal{H}_{m'})}$ is a bijection on top $k$ Alexander gradings.\\

The third step of the proof is to show that for high enough $m$, the top $k$ groups in the chain complex (with respect to Alexander grading) are generated by elements in $\mathfrak{G}^{n-1}(\mathcal{H}_m)$ i.e.
\begin{equation}\label{TopAlexandersaturatioon}
\forall \ 0 \leq j \leq k-1 \ \  \widehat{CFK}(\mathcal{H}_m,R_m-j) \subset \langle \mathfrak{G}^{n-1}(\mathcal{H}_m)\rangle
\end{equation}
The same argument works for this step, replacing Lemma \ref{Gengradingchange} with Lemma \ref{Gradingchange} and using the linear growth of relative Alexander gradings.\\

Combining these facts, we will have that the map $t^{m'}_{m}$ is a bijection on top $k$ Alexander gradings which also preserves relative Alexander grading based on Equation \ref{TopAlexandersaturatioon} and Lemma \ref{Gengradingchange}. As a result, we have the following isomorphism of Abelian groups: 
$$\bigoplus_{j=0,\dots,k-1} t^{m'}_{m}(j) : \bigoplus_{j=0,\dots,k-1} \widehat{CFK}(\mathcal{H}_{m},R_{m}-j) \longrightarrow \bigoplus_{j=0,\dots,k-1} \widehat{CFK}(\mathcal{H}_{m'},R_{m'}-j)$$
The same localization argument proves that $\bigoplus_{j=0,\dots,k-1} t^{m'}_{m}(j)$ is a chain map and as a result induces the following isomorphisms on knot Floer homology
$$\forall \ 0 \leq j \leq k-1\ : \ (t^{m'}_{m}(j))_{*}: \widehat{HFK}(\mathcal{H}_{m},R_{m}-j) \xrightarrow{\qquad \cong \qquad} \widehat{HFK}(\mathcal{H}_{m'},R_{m'}-j)$$
The final step is to show that the stabilization region reaches the non-trivial knot Floer homologies i.e. Alexander gradings in the interval $[-g(K_m), g(K_m)]$. This also exactly follows the computation for $n=3$. The main idea is that $g(K_m)$ grows linearly with respect to $m$ with the slope $\frac{n(n-1)}{2}$. Based on Lemma \ref{Gengradingchange}, the span of knot Floer homology of $\widehat{HFK}(K_m)$, which is equal to $R_m-L_m$, grows with slope at most $n(n-1)$. This means that $R_m$ can't grow with a slope higher than $\frac{n(n-1)}{2}$, as it means that $L_m$ decreases with a slope less than $\frac{n(n-1)}{2}$. This contradicts the fact that $-g(K_m)$ decreases with slope $\frac{n(n-1)}{2}$ and $\widehat{HFK}(K_m,-g(K_m)) \neq 0$. 
\end{proof}
We established the stabilization phenomenon and showed it reaches the non-trivial knot Floer homologies. As we mentioned, based on the works of Baker and Motegi \cite{Baker2017SeifertVS}, the genus $g(K_m)$ has linear growth with slope $\frac{n(n-1)}{2}$. This means that the upper range of the knot Floer homology also has linear growth with slope $\frac{n(n-1)}{2}$, which in turn means that the shift in the Alexander grading in the stabilization phenomenon i.e. $R_{m+1}-R_{m}$ is equal to $\frac{n(n-1)}{2}$ for high enough $m$. 
\begin{coro}
    For $m>>0$, $R_m$ is a linear function of $m$ with slope $\frac{n(n-1)}{2}$.  
\end{coro}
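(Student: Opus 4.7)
The plan is to combine three inputs already established by this point in the paper: the stabilization of the top non-trivial knot Floer homologies in Theorem \ref{MainTheorem}, the Ozsv\'ath--Szab\'o genus detection $g(K_m)=\max\{\,j:\widehat{HFK}(K_m,j)\neq 0\,\}$, and the Baker--Motegi linear growth $g(K_{m+1})-g(K_m)=\tfrac{n(n-1)}{2}$ for $m\gg 0$. The key quantity to track is the offset
\begin{equation*}
c_m \;:=\; R_m - g(K_m) \;\geq\; 0,
\end{equation*}
which records the gap between the top Alexander grading in the chain complex $\widehat{CFK}(\mathcal{H}_m)$ and the top Alexander grading supporting non-zero homology. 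My aim will be to show $c_m$ is eventually constant; combined with Baker--Motegi this immediately upgrades to $R_{m+1}-R_m=\tfrac{n(n-1)}{2}$ for $m\gg 0$, which is the corollary.

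I would prove that $c_m$ stabilizes in two steps. \emph{First}, I would obtain a uniform bound $c_m\leq C$ independent of $m$: Lemma \ref{Gengradingchange} caps the growth of the span $R_m-L_m$ at slope $n(n-1)$, while $L_m\leq -g(K_m)$ together with the Baker--Motegi slope $\tfrac{n(n-1)}{2}$ for $g(K_m)$ forces $R_m\leq g(K_m)+\mathrm{const}$. This is exactly the manoeuvre performed at the end of the proof of Theorem \ref{MainTheorem}, reinterpreted as a bound on $c_m$. \emph{Second}, with $C$ in hand I would fix any $k\geq C+2$ and apply the stabilization isomorphisms
\begin{equation*}
(t^{m'}_m(j))_{*}\;:\;\widehat{HFK}(K_m,R_m-j)\xrightarrow{\;\cong\;}\widehat{HFK}(K_{m'},R_{m'}-j),\qquad 0\leq j\leq k-1,
\end{equation*}
constructed in the proof of Theorem \ref{MainTheorem} for $m'\geq m\geq N$. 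Since $c_m,c_{m'}\leq C<k$, the characterisation $\widehat{HFK}(K_m,R_m-j)\neq 0 \Longleftrightarrow j\geq c_m$ on the range $j<k$ transfers across the isomorphism, forcing $c_{m'}=c_m$ for $m$ large.

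The step I expect to be the genuine obstacle is the uniform bound on $c_m$: no direct bound on $R_m$ itself appears anywhere in the preprint, and one can only reach it indirectly through the chain of inequalities relating $R_m$, $L_m$, $g(K_m)$, and the slope of the span from Lemma \ref{Gengradingchange}. Once that bound is extracted, the remainder is essentially formal: the identity $R_m=g(K_m)+c$ for $m\gg 0$ together with Baker--Motegi yields $R_m=\tfrac{n(n-1)}{2}\,m+\mathrm{const}$, completing the corollary.
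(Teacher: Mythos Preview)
Your proposal is correct and follows essentially the same approach as the paper: the paper's paragraph preceding the corollary combines the fact that the stabilization region reaches the nontrivial homologies (which is your uniform bound on $c_m=R_m-g(K_m)$, extracted from the inequality $r_{m_1}\leq -L_m-g(K_m)$) with the Baker--Motegi slope for $g(K_m)$ to conclude $R_{m+1}-R_m=\tfrac{n(n-1)}{2}$. One small imprecision: your biconditional ``$\widehat{HFK}(K_m,R_m-j)\neq 0 \Longleftrightarrow j\geq c_m$'' need not hold for $j>c_m$, but what you actually need---that $c_m$ is the \emph{minimal} $j$ with nonzero homology---is correct and suffices to force $c_m=c_{m'}$.
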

Unfortunately, this doesn't give us any information about the growth of $deg(\Delta(K_m))$ i.e. the parameter $r$ in Theorem \ref{Chenstablize}.\\

We end this paper with a remark about the Maslov grading. Both Lemmas \ref{Gradingchange} and \ref{Gengradingchange}, show that similar to Alexander grading, (signed) distance of the Maslov gradings of generators inside $\mathfrak{G}^{n-1}(\mathcal{H}_m)$ and other generators grows linearly under the map $t_m$ as well. The slope of growth for Maslov grading is smaller, but it does not affect the arguments leading to Equations \ref{saturation} and \ref{TopAlexandersaturatioon}. As a result, we can derive a similar equation about the Maslov grading. The following corollary formulates this result. 
\begin{coro}
Let $B_m$ be the highest Maslov grading of a generator of $\widehat{CFK}(\mathcal{H}_m)$. For any $k \in \mathbb{N}$, there exists $m_k$, such that for any $m \geq m_k$ we have the following:
$$\forall \ 0 \leq j \leq k-1 \ \  \widehat{CFK}_{B_m-j}(\mathcal{H}_m) \subset \langle \mathfrak{G}^{n-1}(\mathcal{H}_m)\rangle.$$
\end{coro}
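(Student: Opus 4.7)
The plan is to mirror the proof of Equation \ref{TopAlexandersaturatioon} almost verbatim, replacing the Alexander grading with the Maslov grading throughout. The remark preceding the corollary already observes that the two ingredients of the saturation argument---exclusion of generators outside the image of $t_m$ from the extremal gradings, and linear growth of the relative grading between $\mathfrak{G}^{n-1}$ and the other families---both survive the switch, and I would simply verify this explicitly.

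First, I would redo the bigon analysis in the Maslov setting. For any generator $\bx_{m+1}\in \mathfrak{G}(\mathcal{H}_{m+1})\setminus t_m(\mathfrak{G}(\mathcal{H}_m))$ containing a newly created intersection point $s_{m+1}$, the bigon of Figure \ref{bigon} connecting $s_m$ to $s_{m+1}$ has $n_{\bw}=0$ and strictly positive Maslov index (immediate from Lipshitz's combinatorial formula, and already used implicitly in the computation $\mu(\tilde{\phi}_s)=\mu(\phi_s)+2$ in the proof of Lemma \ref{Gradingchange}). Hence the swap $\bx_{m+1}\mapsto \bx_m$ strictly raises the Maslov grading, and iterating down to $s_1$ produces a companion whose Maslov grading exceeds that of $\bx_{m+1}$ by a quantity growing linearly in $m$. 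For $m$ large enough relative to $k$, the top $k$ Maslov gradings of $\widehat{CFK}(\mathcal{H}_{m+1})$ are therefore entirely contained in $t_m(\mathfrak{G}(\mathcal{H}_m))$, and composing shows the analogous statement for $t^{m'}_m$.

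Second, I would invoke part (2) of Lemma \ref{Gengradingchange}: each application of $t_m$ raises $M(\tilde{\bx})-M(\tilde{\by})$ by $2$ whenever $\bx\in\mathfrak{G}^{i+1}(\mathcal{H}_m)$ and $\by\in\mathfrak{G}^i(\mathcal{H}_m)$, and by additivity of the Maslov index under composition of Whitney disks, the relative Maslov grading of any $\mathfrak{G}^{n-1}$ generator over a generator in a strictly smaller family grows under $t_m$ with slope at least $2$. Since $\mathfrak{G}^{n-1}(\mathcal{H}_0)$ is nonempty, repeated application of $t$ eventually pulls at least one $\mathfrak{G}^{n-1}$ generator into the top-$k$ Maslov window at some stage $m_0$. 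Taking $m_k := m_0 + \lceil k \rceil$, the slope-$2$ bound then forces every generator outside $\mathfrak{G}^{n-1}(\mathcal{H}_m)$ to lie at least $k$ Maslov units below a suitable $\mathfrak{G}^{n-1}$ generator, hence out of the top-$k$ window, for all $m\geq m_k$. This yields the desired containment $\widehat{CFK}_{B_m-j}(\mathcal{H}_m)\subset\langle \mathfrak{G}^{n-1}(\mathcal{H}_m)\rangle$ for $0\leq j\leq k-1$.

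The argument requires no new analytic input beyond the already-established Lemma \ref{Gengradingchange} and the bigon picture of Figure \ref{bigon}; the only difference from the Alexander case is the smaller slope ($2$ instead of $n$), which merely enlarges the constant $m_k$ and does not affect the structure of the proof. This is the reason the author suggests the argument is essentially free once the Alexander version has been carried out.
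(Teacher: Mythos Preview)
Your proposal is correct and follows exactly the approach the paper itself indicates: the paper's ``proof'' is the paragraph preceding the corollary, which simply asserts that the arguments leading to Equations \ref{saturation} and \ref{TopAlexandersaturatioon} go through verbatim for the Maslov grading since Lemmas \ref{Gradingchange} and \ref{Gengradingchange} give linear growth (with the smaller slope~$2$) of the relative Maslov grading between $\mathfrak{G}^{n-1}$ and the lower families. You have spelled out the two ingredients---the bigon of Figure~\ref{bigon} pushing new generators at least $2m$ Maslov units below the top, and the slope-$2$ drift from part~(2) of Lemma~\ref{Gengradingchange}---more explicitly than the paper does, but the route is identical. One cosmetic point: $\mathfrak{G}^{n-1}(\mathcal{H}_0)$ may well be empty (for $m=0$ the only local intersection points are the $C^i,D^i$), so your seed generator should be taken in $\mathfrak{G}^{n-1}(\mathcal{H}_1)$ instead; this does not affect the argument.
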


\bibliography{bibtemplate}

\providecommand{\bysame}{\leavevmode ---\ }
\providecommand{\og}{``}
\providecommand{\fg}{''}
\providecommand{\smfandname}{et}
\providecommand{\smfedsname}{\'eds.}
\providecommand{\smfedname}{\'ed.}
\providecommand{\smfmastersthesisname}{M\'emoire}
\providecommand{\smfphdthesisname}{Th\`ese}
\begin{thebibliography}{Hed05}

\bibitem[BM17]{Baker2017SeifertVS}
{\scshape K.~L. Baker {\normalfont \smfandname} K.~Motegi} -- {\og Seifert vs. slice genera of knots in twist families and a characterization of braid axes\fg}, \emph{Proceedings of the London Mathematical Society} \textbf{119} (2017).

\bibitem[Che22]{chen2022twistings}
{\scshape D.~Chen} -- {\og Twistings and the {A}lexander polynomial\fg}, 2022.

\bibitem[Hed05]{Hedden}
{\scshape M.~Hedden} -- {\og {On knot Floer homology and cabling}\fg}, \emph{Algebraic \& Geometric Topology} \textbf{5} (2005), no.~3, p.~1197 -- 1222.

\bibitem[LC16]{LambertCole2016TwistingMA}
{\scshape P.~Lambert-Cole} -- {\og Twisting, mutation and knot {F}loer homology\fg}, \emph{Quantum Topology} (2016).

\bibitem[Ras03]{Rasmussenthesis}
{\scshape J.~A. Rasmussen} -- {\og Floer homology and knot complements\fg}, \smfphdthesisname, 2003, Copyright - Database copyright ProQuest LLC; ProQuest does not claim copyright in the individual underlying works; Last updated - 2023-03-03, p.~126.

\end{thebibliography}
\bibliographystyle{smfalpha} 

\end{document}